\title{Relative Morse Categorification Theory}
\author{Danning Lu\thanks{danninglu1994@pku.edu.cn, School of Mathematical Sciences, Peking University, Beijing, China} \and Xiaohan Yan\thanks{1300010605@pku.edu.cn, School of Mathematical Sciences, Peking University, Beijing, China}}
\date{}
\begin{document}
\newtheorem{defi}{Definition}
\newtheorem{thrm}{Theorem}
\newtheorem{lem}{Lemma}
\newtheorem{prop}{Proposition}
\newtheorem{rmk}{Remark}
\maketitle
\begin{abstract}
In this paper, we define a relative Morse complex for manifold with boundary using the handlebody decomposition of the manifold. We prove that the homology of the relative Morse complex is isomorphic to the relative singular homology. Furthermore, we construct $A_\infty$-category structure on the relative Morse complex by counting the trajectory trees among the critical points of different Morse functions. Our result generalizes Fukaya's construction on closed manifold and Manabu's construction of absolute homology on manifold with boundary.
\end{abstract}

\section{Introduction}

\subsection{Overview}
\par In 1980s, E. Witten \cite{Wit} first described the Morse complex with respect to Morse function $f$ and Riemannian metric $g$. The chain groups $CM_k(f)$ are free abelian groups generated by the critical points of $f$ with Morse index $k$, while the boundary homomorphisms are defined by counting the number of gradient trajectories flowing from one critical point $p$ to another one with a lower Morse index. It was proved that the Morse homology is isomorphic to the singular homology of the manifold. Then in 1990s, when considering the Floer homologies, K. Fukaya found that similar ideas could be used in Morse homology theory. In \cite{Fuk}, he defined the $A_\infty$-structure for the Morse Category, in which the objects were Morse functions and the morphisms $\text{Hom}(f_1, f_2)$ between two objects $f_1, f_2$ were the Morse Complex with respect to $f_1-f_2$. For generic Morse functions $f_0, \ldots, f_k$, let $p_i(i=0, \ldots, k-1)$ be a critical point of $f_i-f_{i+1}$, $p_k$ be a critical point of $f_0-f_k$. Denote by $\sharp\mathcal{M}(p_0, \ldots, p_{k-1}, p_k)$ the cardinality of the dimension-zero moduli space $\mathcal{M}(p_0, \ldots, p_k)$, i.e. the set of gradient trajectory trees starting from $p_0, \ldots, p_{k-1}$ and ending at $p_{k}$. Then the $A_\infty$-maps were given by
\[
m_k([p_{k-1}]\otimes\ldots\otimes[p_0]) = \sum_{p_k} \sharp\mathcal{M}(p_0, p_1, \ldots, p_k)[p_k],
\]
where the sum is over all critical points $p_k$ whose Morse index satisfies
\[
\mu(p_k)=\mu(p_0) + \cdots + \mu(p_{k-1})-(k-1)n + k - 2.
\]
The $A_\infty$-relationship can be attributed to the conclusion that the number of points with sign is zero in dimension-one moduli spaces' boundary. In fact, the cup product in Morse Category can be paralleled with the product structure on deRham complex in a way. In \cite{Con}, K. W. Chan, N. C. Leung and Z. N. Ma proved that the cup products on deRham complex have semiclassical limits as the cup product defined by counting gradient trajectory trees with respect to Morse functions.
\par Furthermore, the theory can be generalized to absolute homology for manifolds with boundary and relative homology. M. Schwarz \cite{MS} introduced a method of constructing the relative Morse complex. For manifold $M$, its submanifold $A$, and a Morse function $f$ on $M$, he proved that the Morse complex of $f$ on $A$ could be treated as a subcomplex of the Morse complex of $f$ on $M$ after doing some slight modifications. In this case, he was able to define relative Morse complex by using the quotient complex. Due to the way of defining the complex, it is easy to see that the homology of the relative Morse complex is exactly the relative homology of $(M, A)$. Besides, M. Akaho(see \cite{Akh2}) defined a Morse complex for manifolds with boundary, by examining certain critical points of cone end Morse functions and Riemannian metric. To be specific, let $N_i\times(0,1)$ be a collar neighborhood of the connected component $N_i$ of the manifold's boundary, and $r$ be the factor of $(0,1)$. Then a cone end Morse function $f$ is one that satisfies $f|_{N_i\times(0,1)}=r^2f_{N_i}+c_i$ in the collar neighborhood, while a cone end Riemannian metric is one that satisfies $g|_{N_i\times(0,1)}=r^2g_{N_i}+dr\otimes dr$. Then he used the internal critical points as well as the specific critical points on boundary $\gamma\in N_i$ such that $f|_{N_i}(\gamma)>0$ to generate the Morse complex. He proved that the Morse homology is isomorphic to the absolute singular homology of the manifold with boundary, through observing the handlebody decomposition of the manifold with respect to unstable manifolds of the critical points as well as the relative cycles. Moreover, he defined the cup product structure in \cite{AKh3}, serving as a prototype of $A_\infty$-categorification of absolute Morse homology theory for manifolds with boundary. Also, for compact manifolds with boundary, F. Laudenbach \cite{LB} introduced a method that allows one to define Morse complex whose homology is isomorphic to absolute or relative singular homology without the requirement in geometric settings.
\par The profound works given by the predecessors motivate our work in this paper. The aim of this paper is to consider the $A_\infty$ categorification of the relative Morse homology theory. However, the definition of the relative Morse homology in this paper is different from that of M. Schwarz. To be specific, we first transform the situation of a manifold and its submanifold to the situation of a manifold with boundary and its boundary. For manifold $\widetilde{M}$ with dimension $n$ and its submanifold $\widetilde{N}$, if the dimension of $\widetilde{N}$ is less than $n$, we can replace $\widetilde{N}$ by its dimension-$n$ tubular neighborhood $N$ while not changing its homotopy type. Let $M = \widetilde{M} - N$. Then by excision lemma, the homology of $(\widetilde{M}, \widetilde{N})$ is isomorphic to that of $(M\cup \partial M \times [0,\epsilon], \partial M \times [0,\epsilon])$, and thus isomorphic to $(M, \partial M)$.  Then for manifold $M$ with boundary, we choose special Riemannian metric $g$ satisfying $g|_{(0,1) \times\partial M} = g_{\partial M}+\frac{1}{r}dr\otimes dr$ near the boundary, and horn-end transverse Morse functions satisfying $f|_{[0,1)\times N_i}=f|_{N_i}-k_i r$, where $k_i$ are nonzero constants. With the above preparations, we define the relative Morse complex directly while avoiding usage of quotient complex as in \cite{MS}. The complex has the following form:
\[
CM_k(f)=\bigoplus_{q\in Cr_k^\bullet(f)}\mathbb{Z}[q],
\]
\[
\partial [q]=\sum_{q'\in Cr_{k-1}^\bullet(f)}\sharp\mathcal{M}(q,q') [q'], \text{for }q\in Cr_k^\bullet(f),
\]
where $\mathcal{M}(q,q')$ is the moduli space of gradient trajectories starting at $q$ with Morse index $k$ and ending at $q^\prime$ with Morse index $k-1$. The $Cr_k^\bullet(f)$ here only embraces those critical points in $M^\circ$ and on `positive boundaries', but not including all the ones with Morse index $k$. After that, we prove the homology of this complex is isomorphic to the manifold's singular homology relative to its boundary. In this step, we use the handlebody decomposition of the manifold and treat the homology of this decomposition as a bridge linking the relative Morse homology and the relative singular homology.
\begin{thrm} \label{t1}
The relative Morse homology $H_*(CM_*(f),\partial_*)$ is isomorphic to the relative singular homology $H_*(M,\partial M)$.
\end{thrm}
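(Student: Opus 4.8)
The plan is to let the handlebody (equivalently, relative CW) decomposition of $M$ induced by the negative gradient flow of $f$ act as a bridge: I will identify $(CM_*(f),\partial_*)$ with the cellular chain complex of this decomposition, and then identify the decomposition's homology with $H_*(M,\partial M)$.

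First I would analyze the flow near $\partial M$. In the collar of a boundary component $N_i$ with coordinate $r$, the hypotheses $f|_{[0,1)\times N_i}=f|_{N_i}-k_ir$ and $g|_{(0,1)\times\partial M}=g_{\partial M}+\frac1r\,dr\otimes dr$ give $\nabla f=\nabla_{N_i}(f|_{N_i})-k_ir\,\partial_r$, so the $r$-component of a negative gradient trajectory obeys $\dot r=k_ir$: on the positive-boundary components ($k_i>0$) the flow is repelled from $\partial M$, a boundary critical point $\gamma$ of $f|_{N_i}$ gains one extra expanding direction and behaves like an interior critical point of Morse index $\mathrm{ind}_{N_i}\gamma+1$ whose unstable manifold reaches into $M^\circ$; on the negative-boundary components ($k_i<0$) the flow is asymptotically attracted to $\partial M$, the unstable manifold of a boundary critical point is trapped inside $\partial M$, and no trajectory ever crosses $\partial M$. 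From this I would conclude that the flow is complete, that Morse--Smale transversality among the points of $Cr^\bullet(f)$ holds for generic data, that the zero-dimensional moduli spaces $\mathcal M(q,q')$ are finite, and that the one-dimensional ones compactify to one-manifolds whose ends are once-broken trajectories. The key point is that a trajectory converging to a point of $Cr^\bullet(f)$ can break only at points of $Cr^\bullet(f)$ — the unstable manifold of a negative-boundary critical point never leaves $\partial M$, so such a point cannot appear as an intermediate break — which makes $\partial$ well defined and yields $\partial^2=0$ in the standard way.

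Next I would assemble the handlebody decomposition. Ordering the critical values $c_1<\cdots<c_N$ and interleaving regular values $a_0<c_1<a_1<\cdots<c_N<a_N$, the usual deformation-along-the-flow argument shows that, up to diffeomorphism and after stabilizing by a fixed collar of $\partial M$, the sublevel set $\{f\le a_j\}$ is obtained from $\{f\le a_{j-1}\}$ by attaching one handle of index $k$ for each $q\in Cr_k^\bullet(f)$ with $f(q)=c_j$: interior critical points contribute exactly as in closed Morse theory, positive-boundary critical points by the index-shifted recipe above, whereas crossing a negative-boundary critical value only reshapes the collar and changes neither the homotopy type rel $\partial M$ nor the cell count — which is precisely why negative-boundary critical points are omitted from $Cr^\bullet(f)$. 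Collapsing $\partial M$ thus presents $(M,\partial M)$ as a relative CW pair with one $k$-cell for every element of $Cr_k^\bullet(f)$, and the cellular boundary operator records the degrees of the attaching maps, which by the classical identification of attaching-map degree with the signed count of flow lines equals $\sum_{q'}\sharp\mathcal M(q,q')[q']$. Hence the cellular chain complex of this CW pair is literally isomorphic to $(CM_*(f),\partial_*)$, so they have the same homology, and the cellular homology of a relative CW pair is its relative singular homology by the standard comparison theorem; chaining the two isomorphisms yields Theorem~\ref{t1}.

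The step I expect to be the main obstacle is the boundary analysis, in two guises. Analytically, one must control the non-compactness coming from trajectories that spiral toward a negative-boundary component, verifying that it contributes nothing to $\partial$ or to $\partial^2$, and one must perform the sublevel-set deformation retractions across boundary critical values while the metric degenerates as $r\to0$; the usual flow estimates apply but must be redone in the horn-end chart. Combinatorially — and this is where the present construction genuinely diverges from Akaho's absolute setting and from Schwarz's quotient construction — one must check that the index shift for positive-boundary critical points, together with the collapsing of $\partial M$, is calibrated so that the handle decomposition models the pair $(M,\partial M)$ rather than $M$ itself or $(M,\partial_-M)$. I would settle this by inducting on $j$ and comparing, at each stage, the long exact sequences of the pairs $\bigl(\{f\le a_j\},\{f\le a_{j-1}\}\bigr)$ and $\bigl(\{f\le a_j\}\cup\partial M,\partial M\bigr)$ via the five lemma.
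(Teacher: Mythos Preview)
Your overall strategy---use the handlebody/CW decomposition induced by the gradient flow as a bridge between $(CM_*(f),\partial_*)$ and $H_*(M,\partial M)$---is exactly the paper's. Your boundary analysis (the $\dot r=k_ir$ computation, the index shift on positive boundaries, the exclusion of negative-boundary points because their unstable manifolds never leave $\partial M$, and the resulting claim that broken trajectories between points of $Cr^\bullet(f)$ break only at points of $Cr^\bullet(f)$) is also what the paper establishes in its Lemmas~1--2 and the discussion of $\partial^2=0$.

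The one genuine divergence is the filtration you use. You propose the classical sublevel-set filtration $\{f\le a_j\}$ (stabilized by a collar), ordered by critical \emph{value}. The paper instead builds its filtration by Morse \emph{index}: it sets $M^{-1}=\partial M$ and then, for each $k$, defines $M^k$ by attaching to $M^{k-1}$ thin tubular neighbourhoods of the unstable manifolds $U_q$ for $q\in Cr_k^\bullet(f)$, never looking at sublevel sets at all. This buys the paper two things. First, since $M^{-1}=\partial M$ from the outset, the pair $(M^k,M^{k-1})$ is automatically a relative attaching of $k$-cells, so the identification $H_k(M^k,M^{k-1})\cong\bigoplus_{q\in Cr_k^\bullet}\mathbb Z[\sigma_q]$ is immediate and no collapsing or five-lemma comparison is needed. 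Second, it sidesteps the awkward geometry of $\{f\le a\}$ near a positive boundary: in the collar one has $\{f\le a\}=\{r\ge (f|_{N_i}-a)/k_i\}$, so the sublevel set meets $N_i$ in $\{f|_{N_i}\le a\}$ and changes there as $a$ crosses a boundary critical value, which makes your ``stabilize by a collar and then collapse'' step more delicate than the usual closed-manifold argument. Your route is workable, but the paper's index filtration is cleaner precisely at the boundary step you flagged as the main obstacle.
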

\par At last, we define the $A_\infty$-structure, still using the technique of counting the gradient trajectories. We first discuss the cup product structure and prove the Leibnitz rule as an example of the more complicated $A_\infty$-relationships. Then, let $F=(f_0, f_1, \ldots, f_k)$ be a generic sequence of functions on $M$ such that $f_i-f_j$ is transverse Morse function for $i\neq j$. Let $q_k$ be a critical point in $Cr^\bullet(f_0-f_k)$ and $q_i$ be a critical point in $Cr^\bullet(f_i-f_{i+1})$ for $i=0, 1, \ldots, k-1$. When the Morse indices satisfy $\mu(q_k) = \mu (q_0) + \cdots + \mu (q_{k-1}) - (k - 1)n + k - 2$, the moduli space $\mathcal{M}(q_0, q_1, \ldots, q_k)$ of gradient trajectory trees originating from $q_0, \ldots, q_{k-1}$ and ending at $q_k$ is a dimension-zero manifold. We define the $A_\infty$-map as
\[
m_k: CM_*(f_{k-1}- f_k)\otimes \cdots \otimes CM_*(f_0- f_1) \rightarrow CM_*(f_0- f_k)[2-k]
\]
\[
m_k([q_{k-1}]\otimes\ldots\otimes[q_0]) = \sum_{q_k} \sharp\mathcal{M}(q_0, q_1, \ldots, q_k)[q_k],
\]
where $\sharp\mathcal{M}(q_0, q_1, \ldots, q_k)$ means the number of points with sign in $\mathcal{M}(q_0, q_1, \ldots, q_k)$. Actually we can see here that the degree of a critical point $q_i$ should be $\mu(q_i) - n$ so that the $A_\infty$-relationships hold true. And we prove the complete $A_\infty$-relationships by considering the boundaries of the moduli spaces $\mathcal{M}(q_0, q_1, \ldots, q_k^\prime)$ for all $q_k^\prime$ such that $\mu(q_k^\prime) = \mu (q_0) + \cdots + \mu (q_{k-1}) - (k - 1)n + k - 3$. Finally, for $\mathfrak{MC}(M)$, the category of transverse Morse functions on $M$, we will have
\begin{thrm} \label{t2}
$\mathfrak{MC}(M)$ is endowed with the $A_\infty$-category structure using $m_k$.
\end{thrm}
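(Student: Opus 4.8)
The plan is to deduce the full family of $A_\infty$-relations
\[
\sum_{\substack{r+s+t=k\\ 1\le s\le k}} (-1)^{\dagger}\, m_{r+1+t}\big(\mathrm{id}^{\otimes t}\otimes m_s\otimes \mathrm{id}^{\otimes r}\big)=0,
\]
with the sign $\dagger$ determined by the degrees $\deg[q_i]=\mu(q_i)-n$ exactly as forced by the $k=1,2$ cases already treated, from the geometry of one-dimensional moduli spaces of gradient trajectory trees. Fix a generic sequence $F=(f_0,\dots,f_k)$, critical points $q_i\in Cr^\bullet(f_i-f_{i+1})$ for $i<k$, and $q_k'\in Cr^\bullet(f_0-f_k)$ with $\mu(q_k')=\mu(q_0)+\cdots+\mu(q_{k-1})-(k-1)n+k-3$, so that the expected dimension of $\mathcal{M}(q_0,\dots,q_{k-1},q_k')$ is one. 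The first step is transversality: extending the genericity argument used for $m_1$ and $m_2$, one shows that for a residual set of perturbations of the $f_i$ (keeping the auxiliary metric horn-end) every trajectory-tree moduli space entering the relation is cut out transversally, hence is a smooth manifold of the expected dimension.

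The second step is compactness: $\mathcal{M}(q_0,\dots,q_{k-1},q_k')$ admits a compactification $\overline{\mathcal{M}}$ to a compact one-dimensional manifold with boundary. The interior estimate is the usual one — the energy along each edge is bounded by the difference of the values of the relevant $f_i-f_j$ at the endpoints — so no fragment of a tree can run off to infinity in the gradient flow; the only sources of non-compactness are (a) the breaking of an edge at an intermediate critical point of some $f_i-f_j$, and (b) the degeneration of a vertex (two incident slopes colliding, or an external input sliding into an internal edge), both of which are codimension one. It is precisely here that the horn-end hypotheses on $g$ and on the $f_i$ enter: they guarantee that no trajectory can escape through, nor accumulate onto, $\partial M$ except at the admissible ``positive boundary'' critical points already recorded in $Cr^\bullet$, so that $\partial M$ produces no additional boundary strata. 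This is the same mechanism that made Theorem \ref{t1} (the case $k=1$) work, applied edge by edge to a tree.

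The third step identifies $\partial\overline{\mathcal{M}}$. Every boundary point is a once-broken trajectory tree, which is the same datum as a splitting of the combinatorial tree into a ``lower'' subtree with $s$ inputs, carrying the factor $m_s$, and an ``upper'' tree with the remaining $k-s+1$ legs, carrying $m_{r+1+t}$; the tree gluing theorem (the analogue of the gluing used to prove $\partial^2=0$ and the Leibniz rule) shows this correspondence is a local bijection at each boundary point. A sign comparison, organised once and for all via coherent orientations of the unstable manifolds exactly as in the $k=2$ computation, identifies the contribution of such a stratum with the term $(-1)^{\dagger} m_{r+1+t}(\mathrm{id}^{\otimes t}\otimes m_s\otimes\mathrm{id}^{\otimes r})$ applied to $[q_{k-1}]\otimes\cdots\otimes[q_0]$, read off on the coefficient of $[q_k']$. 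Since a compact $1$-manifold has signed boundary count zero, summing over all admissible $q_k'$ yields the $A_\infty$-relation; quantifying over all $k$ and all composable strings of morphisms in $\mathfrak{MC}(M)$ gives the category structure.

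The main obstacle I anticipate is exactly the compactness-and-gluing analysis at $\partial M$: one must verify in detail that the horn-end geometry rules out the degenerations absent in Fukaya's closed-manifold setting — trajectories limiting onto $\partial M$, a fragment of the tree ``leaking'' across a positive boundary component, or a broken configuration one of whose pieces is an Akaho-type boundary trajectory not itself counted by some $m_j$ — so that $\partial\overline{\mathcal{M}}$ consists of nothing but the expected product moduli spaces, and that the coherent orientations behave correctly across these mixed interior/boundary strata. Once this bijection and the sign matching are established, the passage from ``signed boundary count is zero'' to the $A_\infty$-equations is purely formal.
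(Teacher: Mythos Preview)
Your proposal takes essentially the same route as the paper: compactify the one-dimensional moduli spaces of gradient trajectory trees, identify the codimension-one boundary as once-broken trees indexed by the splittings $T=T_1\sqcup_{ij}T_2$, invoke the horn-end geometry to ensure every breaking point lies in $Cr^\bullet$ (the paper's version of your ``no leaking across $\partial M$'' is simply that a breaking point cannot lie on a negative boundary, since a trajectory entering a negative collar never re-emerges), and read off the $A_\infty$-relation from the vanishing of the signed boundary count. The paper organises this tree-by-tree, summing $\sharp\partial\overline{\mathcal{M}_T}$ over combinatorial types $T$, and then groups the broken strata by the label $(i,j)$ of the broken edge to recover the $m_{k-j+i+1}\circ m_{j-i}$ terms; this is exactly your third step.

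One clarification is worth making. Your item (b), ``degeneration of a vertex (two incident slopes colliding, or an external input sliding into an internal edge),'' does \emph{not} contribute to $\partial\overline{\mathcal{M}}$. When an internal edge length shrinks to zero the limiting configuration is an interior point of the moduli for a different combinatorial tree type; the paper handles this by noting that the boundaries of the per-type pieces $\overline{\mathcal{M}_T}$ ``either become a part of the boundary of the whole moduli space, or can also be treated as the boundaries of other smaller spaces but with opposite orientation,'' i.e.\ the edge-collapse faces cancel in pairs across tree types. The only genuine codimension-one boundary strata are the edge-breakings of your (a), and those are precisely the product moduli entering the $A_\infty$-identity. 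With that adjustment your outline matches the paper's argument.
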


\subsection{Structure of the Paper}
In section 2, we introduce the definitions about horn-end transverse Morse functions, which are the functions we use throughout the paper, and its critical points. After that, we construct a decomposition(handlebody decomposition) of $M$. Then in section 3, we construct the relative Morse complex of $M$, and prove that the homology of the complex we defined is isomorphic to the relative singular homology using handlebody decomposition. And lastly in section 4, we categorify the relative Morse theory by defining the Morse Category, and endow the category with $A_\infty$-structure.

\section{Horn-end Transverse Morse Function and Handlebody Decomposition}
\par In this section, we introduce the definition of horn-end transverse Morse function $f$ and its critical points, and then introduce the handlebody decomposition of $M$ with respect to $f$.

\subsection{Horn-end Transverse Morse Function and Horn-end Riemannian Metric}
\par Let $M$ be a compact, oriented manifold of dimension $n$ with boundary $\partial M$. Denote by $N_i(i\in \Lambda)$ the connected components of $\partial M$. We fix a collar neighborhood $[0,1)\times N_i\subset M$, and denote by $r$ the standard coordinate on the $[0,1)$ factor.
\begin{defi}
A smooth function $f$ on $M$ is called a Morse function if it satisfies the following conditions:\\
(1)$f| _{M\setminus\partial M}$ is a Morse function on the manifold $M\setminus\partial M $;\\
(2)$f| _{\partial M}$ is a Morse function on the manifold $\partial M$.\\
And it is called a transverse Morse function if it satisfies the extra condition:\\
(3)For any point $x$ on the collar neighborhood, $\left.-\frac{\partial f}{\partial r}\right|_{x}\neq 0$.
\end{defi}

\begin{defi}
For a transverse Morse function $f$ on $M$, since $-\frac{\partial f}{\partial r}$ is continuous on any connected components of $\partial M$, so we can call $N_i$ to be \emph{positive} (with respect to $f$) if $\left.-\frac{\partial f}{\partial r}\right|_{N_i}> 0$, and \emph{negative} if $\left.-\frac{\partial f}{\partial r}\right|_{N_i}< 0$.
\end{defi}
\par For simplicity, in this paper we only consider \emph{horn-end transverse functions} which are transverse Morse functions satisfying that on each collar neighborhood $[0,1)\times N_i$, $f|_{[0,1)\times N_i}=f|_{N_i}-k_i r$, where $k_i$ is a nonzero constant. Thus $N_i$ is positive if $k_i$ is positive and vise versa.
\begin{defi}
We define a \emph{horn-end metric} on $ M$ which has the following form on the collar neighborhood:
$$g|_{(0,1)\times\partial M}=g_{\partial M}+\frac{1}{r}dr\otimes dr.$$
Here $g_{\partial M}$ is a Riemannian metric on $\partial M$.
\end{defi}
\begin{figure}[htbp]
  \centering
  \includegraphics[width=0.5\textwidth]{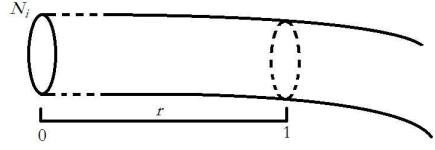}\\
  \caption{A Positive Boundary With Horn-end Metric}
  \label{Fig0}
\end{figure}
\begin{defi}
Let $X_f$ be the \emph{negative gradient vector field} of $f$, such that:
\begin{equation}
X_f=\left\{
\begin{aligned}
  &-\textrm{grad}f  ,\textrm{on } M\setminus\partial M \\
  &-\textrm{grad}\left(f|_{\partial M}\right)  , \textrm{on } \partial M
\end{aligned}
\right.
\end{equation}
And we denote by $\varphi_p(t)$ the \emph{trajectory line} generated by the vector field $X_f$ with $\varphi_p(0)=p$.
\end{defi}
\begin{lem}
$X_f$ is continuous.
\end{lem}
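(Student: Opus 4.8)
The plan is to reduce the statement to an explicit local computation in the collar neighborhoods $[0,1)\times N_i$. Away from $\partial M$ the vector field $X_f=-\mathrm{grad}\,f$ is smooth, being built from the smooth function $f$ and the smooth metric $g$ on $M\setminus\partial M$; and the restriction of $X_f$ to the closed submanifold $\partial M$ is $-\mathrm{grad}(f|_{\partial M})$, which is smooth for the metric $g_{\partial M}$. Hence the only place where continuity of $X_f$ as a section of $TM\to M$ could fail is at a boundary point approached from the interior, and this is a local question in a collar.

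So, I would fix a component $N_i$ and local coordinates $y=(y^1,\dots,y^{n-1})$ on an open subset of $N_i$, yielding coordinates $(y,r)$ on the collar. Since $g_{\partial M}$ is pulled back from $\partial M$, the horn-end metric is block-diagonal and $r$-independent in the tangential block,
$$ g = (g_{\partial M})_{ab}(y)\,dy^a\otimes dy^b + \tfrac{1}{r}\,dr\otimes dr, $$
so the inverse metric is $g^{ab}=(g_{\partial M})^{ab}(y)$, $g^{rr}=r$, with no mixed terms. The horn-end form of $f$ gives $f=f|_{N_i}(y)-k_i r$, so $\partial_a f=\partial_a(f|_{N_i})$ is independent of $r$ and $\partial_r f=-k_i$. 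Plugging into $\mathrm{grad}\,f=g^{ij}(\partial_j f)\,\partial_i$ yields
$$ \mathrm{grad}\,f = (g_{\partial M})^{ab}\,\partial_b(f|_{N_i})\,\partial_a \;-\; k_i r\,\partial_r = \mathrm{grad}_{\partial M}(f|_{N_i}) - k_i r\,\partial_r, $$
hence $X_f=-\mathrm{grad}_{\partial M}(f|_{N_i})+k_i r\,\partial_r$ on $(0,1)\times N_i$. The tangential summand is exactly the intrinsic negative gradient of $f|_{N_i}$ on $(N_i,g_{\partial M})$ and does not depend on $r$; the normal summand $k_i r\,\partial_r$ is continuous in $(y,r)$ and vanishes at $r=0$. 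Therefore $X_f$ extends continuously across $r=0$, with boundary value $-\mathrm{grad}_{\partial M}(f|_{N_i})=-\mathrm{grad}(f|_{\partial M})|_{N_i}$, which is precisely $X_f$ as defined on $\partial M$. Covering each $N_i$ by such coordinate charts and running over the finitely many components finishes the argument.

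I do not anticipate a real obstacle: the content of the lemma is simply that the factor $g^{rr}=r$ of the horn-end metric forces the normal component of $\mathrm{grad}\,f$ to decay like $r$ as one approaches the boundary (whereas a product metric $g_{\partial M}+dr\otimes dr$ would leave a nonzero normal component $-k_i$, and $X_f$ would then be discontinuous at $\partial M$). The only points needing a little care are bookkeeping ones: that the block-diagonal form of $g$ is unaffected by changes of the $y$-coordinates, which fix $r$; and that the term $f|_{N_i}$ appearing in the collar formula for $f$ coincides with $f|_{\partial M}$ restricted to $N_i$, which is part of the definition of a horn-end transverse Morse function.
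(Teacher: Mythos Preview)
Your proposal is correct and follows essentially the same approach as the paper: compute $X_f$ explicitly on the collar using the block-diagonal horn-end metric and the horn-end form of $f$, obtaining $X_f=k_ir\,\partial_r-\mathrm{grad}(f|_{N_i})$ on $(0,1)\times N_i$, and observe that the normal component vanishes as $r\to 0$ so that the limit matches the boundary definition $-\mathrm{grad}(f|_{N_i})$. Your write-up is in fact more careful than the paper's, making explicit the inversion of the metric and the role of $g^{rr}=r$ in killing the normal component.
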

\begin{proof}
For a collar neighborhood $[0,1)\times N_i$,
$$df=k_i dr + d\left(f|_{N_i}\right).$$
So
$$X_f=\left\{
\begin{aligned}
  &k_ir\frac{\partial}{\partial r}-\textrm{grad}\left(f|_{N_i}\right)  ,& \textrm{on } (0,1)\times N_i \\
  &-\textrm{grad}\left(f|_{N_i}\right) & , \textrm{on } N_i
\end{aligned}
\right.,$$
thus $X_f$ is continuous.
\end{proof}
\begin{lem}
If there exist $t_0$ and $p$ such that $p\notin \partial M$ and $\varphi_p(t_0)\in \partial M$, then $t_0=\pm \infty$ and $\varphi_p(t_0)$ is a critical point of $f_{N_i}$.
\end{lem}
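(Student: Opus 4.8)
The plan is to push everything into a collar neighbourhood of $\partial M$ and exploit the explicit form $X_f = k_i r\,\frac{\partial}{\partial r} - \mathrm{grad}\bigl(f|_{N_i}\bigr)$ on $(0,1)\times N_i$ obtained in the proof of the previous lemma: on that neighbourhood the $r$-coordinate evolves by the scalar equation $\dot r = k_i r$, and this single fact yields both assertions. First I would show that $t_0$ cannot be finite. Suppose, for contradiction, that $\varphi_p(t_0)\in\partial M$ for some finite $t_0$; after replacing $t$ by $-t$ if necessary we may take $t_0 = \inf\{\,t\ge 0 : \varphi_p(t)\in\partial M\,\}$, which is strictly positive since $p\notin\partial M$ and $\partial M$ is closed, so $\varphi_p(t)\notin\partial M$ for $0\le t<t_0$ while $\varphi_p(t_0)$ lies on some component $N_i$. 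By continuity of $\varphi_p$ there is $\delta>0$ with $\varphi_p(t)\in[0,1)\times N_i$ for all $t\in[t_0-\delta,t_0]$; writing $\varphi_p(t)=(r(t),x(t))$ there we have $r(t)>0$ for $t<t_0$ and $r(t_0)=0$, but $\dot r(t)=k_i r(t)$ on $(0,1)\times N_i$ forces $r(t)=r(t_0-\delta)\,e^{k_i(t-t_0+\delta)}$, so $r(t)\to r(t_0-\delta)e^{k_i\delta}>0$ as $t\to t_0^-$, contradicting $r(t_0)=0$. Hence $t_0=\pm\infty$.

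Next I would identify the limit. Say $t_0=+\infty$ (the case $t_0=-\infty$ is identical after reversing time), so that $x_\ast:=\lim_{t\to+\infty}\varphi_p(t)$ exists and lies in $\partial M$; since the components of $\partial M$ are pairwise disjoint and closed, $x_\ast$ lies on a single $N_i$, and by the previous step $\varphi_p(t)\in(0,1)\times N_i$ for all large $t$. Writing $\varphi_p(t)=(r(t),x(t))$ there, we get $r(t)\to 0$, $x(t)\to x_\ast$, and $\dot x(t)=-\mathrm{grad}\bigl(f|_{N_i}\bigr)(x(t))$ by the previous lemma, so $x(\cdot)$ is a negative gradient trajectory of the Morse function $f|_{N_i}$ on the compact manifold $N_i$. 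From $\frac{d}{dt}f|_{N_i}(x(t))=-\bigl|\mathrm{grad}(f|_{N_i})(x(t))\bigr|^{2}$ and $f|_{N_i}(x(t))\to f|_{N_i}(x_\ast)$ one obtains $\int^{\infty}\bigl|\mathrm{grad}(f|_{N_i})(x(t))\bigr|^{2}\,dt<\infty$, hence $\mathrm{grad}(f|_{N_i})(x(t_n))\to 0$ for some sequence $t_n\to+\infty$, and continuity then forces $\mathrm{grad}(f|_{N_i})(x_\ast)=0$. Thus $\varphi_p(t_0)=x_\ast$ is a critical point of $f|_{N_i}$.

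The one genuinely delicate point, and the step I would treat most carefully, is the first: one must make sure that the arc of the trajectory under consideration lies inside a \emph{single} collar $[0,1)\times N_i$, so that the scalar identity $\dot r=k_i r$ is legitimately available — this is exactly why the argument passes to the infimum $t_0$ and uses continuity of $\varphi_p$ together with the disjointness of the boundary components. Everything else is elementary: an exponential never vanishes in finite time, and a convergent orbit of a continuous gradient vector field must converge to a zero of that field (one may instead run the same computation on the $\omega$-limit set of $\varphi_p$, which is then seen to be a single critical point because $f|_{N_i}$ is Morse). As a by-product the formula for $r(t)$ records that $t_0=-\infty$ exactly when $N_i$ is positive ($k_i>0$) and $t_0=+\infty$ exactly when $N_i$ is negative ($k_i<0$).
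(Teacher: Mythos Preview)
Your proof is correct and follows the same core idea as the paper: pass to the collar $(0,1)\times N_i$, decouple the flow into $\dot r = k_i r$ and $\dot x = -\mathrm{grad}(f|_{N_i})(x)$, and read both conclusions off the exponential solution $r(t)=r_0 e^{k_i t}$. The paper's argument is terser---it simply assumes the arc lies in a single collar for all $t\in[0,t_0)$ and asserts without further justification that the $N_i$-component converges to a critical point---whereas you supply the missing care on both counts (the infimum argument to localize to one collar, and the $\int|\nabla f|^2<\infty$ argument for the limit), but the strategy is identical.
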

\begin{proof}
Assume that $t_0>0$, $\varphi_p(t_0)\in N_i$ and $\varphi_p(t)\in [0,1)\times N_i,\forall t\in [0,t_0)$. Then $\varphi_p(t)$ can be written in the form $\left(r(t),\overrightarrow{x}(t)\right)$ for $t\in [0,t_0]$. Since $X_f=k_ir\frac{\partial}{\partial r}-\textrm{grad}\left(f|_{N_i}\right)$, we can write down the equation for $\left(r(t),\overrightarrow{x}(t)\right)$:
\[
\begin{aligned}
&r'(t)=k_i r(t)\\
&\overrightarrow{x}'(t)=X_{f_{N_i}}(x(t))
\end{aligned}.
\]
From the first equation we can know that $r(t)=r(0)e^{k_i t}$. Since $p\notin N_i$, $r(0)\neq0$. So $r(t_0)=0$ requires that $t=+\infty$ and $k_i<0$. The second equation shows that $\overrightarrow{x}(t)$ is a trajectory line of $f_{N_i}$, thus $t=+\infty$ means that $\overrightarrow{x}(t)$ is a critical point. It is similar for $t_0<0$.
\end{proof}

\subsection{Stable and Unstable Manifolds}
\par Let $M$ be an $n$-dimensional oriented manifold with boundary $\partial M=\bigcup N_i$ as before, and $g$ and $f$ be horn-end Riemannian metric and a horn-end transverse Morse function on $M$, respectively. We fix an orientation of $\partial M$ in the following way. At a certain point $x\in \partial M$, let $v_{in}$ be a vector in $T_xM$ pointing 'inward'. We expand $v_{in}$ into a basis of $T_xM$ such that $(v_{in}, v_1,..., v_{n-1})$ represents the orientation of $M$ and $\{v_i\}_{i=1, 2, \ldots, n-1}$ forms a basis of $T_x\partial M$, we define the orientation of $\partial M$ as $(v_1,..., v_{n-1})$.
\par Now we introduce some notations considering the critical points. About the classification of critical points, we denote
\begin{itemize}
  \item by $Cr^\circ(f)$ the set of internal critical points, i.e. critical points of $f| _{M\setminus\partial M}$;
  \item by $Cr^+(f)$ the set of critical points on positive boundaries, i.e. critical points of $f_{N_i}$ for positive boundaries $N_i$;
  \item by $Cr^-(f)$ the set of critical points on negative boundaries, i.e. critical points of $f_{N_j}$ for negative boundaries $N_j$;
  \item by $Cr^\bullet(f):=Cr^\circ(f)\bigcup Cr^+(f)$;
  \item by $Cr(f)$ the set of all critical points.
\end{itemize}
We still use $\mu(\alpha)$ to denote the Morse index of the critical point $\alpha$ (with respect to $f$), then
\begin{itemize}
  \item for $p\in Cr^\circ(f)$, let $\mu(p)=\mu_{f|_{M\setminus\partial M}}$;
  \item for ${\gamma}\in Cr^+(f)\cap N_i$, let $\mu({\gamma})=\mu_{f|_{N_i}}+1$;
  \item for ${\delta}\in Cr^-(f)\cap N_j$, let $\mu({\delta})=\mu_{f|_{N_j}}$.
\end{itemize}
\par According to the Morse index, we are able to separate $Cr(f)$ into several mutually disjoint subsets $Cr_k(f)=\{x\in Cr(f)| \mu(x)=k\}$, in which the subscripts reflect the common Morse index of critical points contained in the subset. Similarly, we can also separate $Cr^\circ(f), Cr^+(f), Cr^-(f), Cr^\bullet(f)$ into disjoint subsets, and use the subscripts to denote the Morse indices. For instance, we use $Cr_k^\bullet(f)$ to denote the set of critical points in $Cr^\bullet(f)$ whose Morse indices are $k$.
\par Let $B^k:=\left\{\left(x_1,x_2,...,x_k\right):x_1^2+x_2^2+...+x^2_k<1\right\}$ be the $k$-dimensional open ball, and $\partial B^k:=\overline{B^k}\setminus B^k$. Moreover, we define the $k$-dimensional open half-ball $H^k:=\left\{\left(x_1,x_2,...,x_k\right):x_1^2+x_2^2+...+x^2_k<1,x_1\geq0\right\}$ and \\
$\partial H^k:=\left\{\left(x_1,x_2,...,x_k\right):x_1^2+x_2^2+...+x^2_k<1,x_1=0\right\}$.
\par Now we define stable manifolds and unstable manifolds for critical points with respect to the flow $\varphi_x(t)$. For $\alpha\in Cr(f)$, define the \emph{stable manifold} as
$$S^f_\alpha=\left\{x\in M|\lim_{t\rightarrow +\infty}\varphi_x(t)=\alpha\right\},$$
and the \emph{unstable manifold} as
$$U^f_\alpha=\left\{x\in M|\lim_{t\rightarrow -\infty}\varphi_x(t)=\alpha\right\}.$$
We omit the superscript $f$ when it does not cause confusion. We have the following proposition about stable and unstable manifolds:
\begin{prop}
(1) The stable manifold of ${q}\in Cr^\bullet(f)$ is diffeomorphic to an open ball of dimension $n-\mu({q})$;\\
(2) The unstable manifold of $p\in Cr^\circ(f)$ is diffeomorphic to an open ball of dimension $\mu(p)$;\\
(3) The unstable manifold of ${\gamma}\in Cr^+(f)$ is diffeomorphic to the half ball of dimension $\mu({\gamma})$;\\
(4) The stable manifold of ${\delta}\in Cr^-(f)$ is diffeomorphic to the half ball of dimension $n-\mu({\delta})$;\\
(5) The unstable manifold of ${\delta}\in Cr^-(f)$ is diffeomorphic to an open ball of dimension $\mu({\delta})$.
\end{prop}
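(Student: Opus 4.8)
The plan is to read off each stable/unstable manifold from the standard local theory of a hyperbolic rest point, the only extra ingredient being the completely explicit form of $X_f$ in the horn-end collar, where it splits as a product of the linear flow $r\mapsto r(0)e^{k_it}$ in the $r$-direction with the negative gradient flow of $f|_{N_i}$ on $N_i$. First I would record the global structure: since $M$ is compact and $X_f$ — which on each collar $[0,1)\times N_i$ equals $k_i r\,\partial_r-\mathrm{grad}(f|_{N_i})$ by the lemma above, hence is in fact smooth across $\partial M$ so that its integral curves are unique — has no zeros escaping to infinity, the flow $\varphi_t$ is complete, i.e.\ defined for all $t\in\mathbb{R}$. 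Each $N_i$ is $X_f$-invariant (there $X_f=-\mathrm{grad}(f|_{N_i})$ is tangent), hence so are $\partial M$ and $M^\circ=M\setminus\partial M$; thus a trajectory issued from $M^\circ$ stays in $M^\circ$ for all finite time. Finally $f$ strictly decreases along every non-constant trajectory, so distinct trajectories never meet and $\varphi_t$ is injective off the rest set. The six (type, stable/unstable) combinations split into three groups.

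\emph{Interior critical points — part (2) and the $Cr^\circ$ part of (1).} For $p\in Cr^\circ(f)$ the point $p$ is a hyperbolic zero of the smooth field $X_f|_{M^\circ}$ with $\mu(p)$ expanding and $n-\mu(p)$ contracting directions by the Morse lemma, so its local stable and unstable manifolds are embedded disks of those dimensions. Since $\partial M$ is invariant and $p\notin\partial M$, both $S_p$ and $U_p$ lie in $M^\circ$, so no boundary effect intervenes, and the usual globalization — $U_p=\bigcup_{t\ge0}\varphi_t(U^{\mathrm{loc}}_p)$ is an increasing union of nested embedded disks, and likewise $S_p$ under $\varphi_{-t}$ — gives $U_p\cong B^{\mu(p)}$ and $S_p\cong B^{\,n-\mu(p)}$.

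\emph{The collar dichotomy — part (5) and the $Cr^+$ part of (1).} Let $\gamma\in Cr^+(f)\cap N_i$, so $k_i>0$, and suppose $\varphi_x(t)\to\gamma$ as $t\to+\infty$. Eventually $\varphi_x(t)$ lies in the collar near $\gamma$, where its $r$-coordinate satisfies $r(t)=r(0)e^{k_it}$; as $\gamma$ sits at $r=0$ we need $r(t)\to0$, and $k_i>0$ forces $r(0)=0$, i.e.\ $x\in N_i$. Hence $S_\gamma\subseteq N_i$ equals the stable manifold of $\gamma$ for $-\mathrm{grad}(f|_{N_i})$ on $N_i$, an open ball of dimension $(n-1)-\mu_{f|_{N_i}}(\gamma)=n-\mu(\gamma)$. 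The mirror argument — for $\delta\in Cr^-(f)\cap N_j$, $k_j<0$, a trajectory with $\varphi_x(t)\to\delta$ as $t\to-\infty$ has $r(t)=r(0)e^{k_jt}\to0$, now forcing $r(0)=0$ — shows $U_\delta\subseteq N_j$ is the unstable manifold of $\delta$ in $N_j$, an open ball of dimension $\mu_{f|_{N_j}}(\delta)=\mu(\delta)$.

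\emph{The half-ball cases — parts (3) and (4), and the main difficulty.} For $\gamma\in Cr^+(f)\cap N_i$ the unstable directions are the $m:=\mu_{f|_{N_i}}(\gamma)$ unstable directions of $f|_{N_i}$ together with the $r$-direction, which is expanding since $k_i>0$. Because the collar flow is the product $\varphi_t(r,x)=\bigl(e^{k_it}r,\psi_t(x)\bigr)$ with $\psi$ the flow of $-\mathrm{grad}(f|_{N_i})$ on $N_i$, the local unstable set of $\gamma$ is the product $[0,\epsilon)\times(\text{local unstable }m\text{-disk of }\gamma\text{ in }N_i)\cong H^{m+1}=H^{\mu(\gamma)}$, its boundary face $\{0\}\times(\cdots)$ lying on $N_i$; globalizing by the forward flow exhibits $U_\gamma$ as an increasing union of nested embedded half-balls, whence $U_\gamma\cong H^{\mu(\gamma)}$ with $\partial U_\gamma=U_\gamma\cap N_i$. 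The identical argument for stable manifolds, with $k_j<0$ rendering the $r$-direction contracting, gives $S_\delta\cong H^{\,n-\mu(\delta)}$ for $\delta\in Cr^-(f)\cap N_j$. The step I expect to be most delicate is precisely this globalization: one must check that propagating the local model by the complete flow yields an \emph{embedded} copy diffeomorphic to the model — injectivity from the strict monotonicity of $f$, together with the fact that an increasing union of nested open (half-)disks is again an open (half-)disk — and, in the half-ball cases, that the object obtained is a genuine manifold-with-boundary whose boundary is exactly its trace on $\partial M$, so that the corner along which the interior piece abuts $N_i$ (resp.\ $N_j$) is identified with the model face $\partial H^k$.
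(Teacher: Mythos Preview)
The paper states this proposition without proof, so there is no argument of theirs to compare against; your outline would in fact supply what the paper omits. The decisive observation you isolate --- the product form $\varphi_t(r,x)=(e^{k_it}r,\psi_t(x))$ of the flow on each collar --- is exactly what makes the five cases transparent: the sign of $k_i$ decides whether the $r$-direction is expanding or contracting at a boundary critical point, hence which of the stable/unstable manifolds is trapped on $N_i$ (and is therefore an open ball by the closed-manifold theory applied to $N_i$) and which picks up the extra half-line factor to become a half-ball. Your caution about the globalization in the half-ball cases is appropriate but does not hide a gap: since $\varphi_t$ preserves $\partial M$ and $M^\circ$ separately, the boundary face of each nested local half-disk is carried into the boundary face of the next, so the increasing union remains a manifold-with-boundary whose boundary is precisely its trace on $N_i$, and the standard ``increasing union of nested (half-)disks is a (half-)disk'' argument applies verbatim.
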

\par We fix a generic Riemannian metric such that the stable manifold $S_\alpha$ and unstable manifold $U_{\alpha'}$ are transverse to each other for $\alpha, \alpha'\in Cr(f)$.
\par As for the orientations, the situations are not uniform. For all critical points, we know that the unstable and stable manifolds intersect transversely. For $p\in Cr^\circ(f)$, we will fix the orientations on $U_p$ and $S_p$ such that, if $(v_1,\ldots,v_{\mu(p)})$ gives the orientation of $T_pU_p$ and $(v_{\mu(p)+1},\ldots,v_n) $ gives the orientation of $T_pS_p$, then $(v_1,\ldots,v_n)$ gives the orientation of $T_pM$.
\par For $\gamma\in Cr^+(f)$, we first fix an orientation $(v_{in}, v_2, \ldots, v_{\mu(\gamma)})$ in $T_\gamma U_\gamma$ for $U_\gamma$, such that $v_{in}$ is a vector pointing inward of $M$, and $\{v_i\}_{i=2, \ldots, \mu(\gamma)}$ forms a basis of $T_\gamma\partial M$. Then a basis $(v_{\mu(\gamma)+1}, \ldots, v_n)$ for $T_pS_\gamma$ is the induced orientation on $S_\gamma$ if $(v_{in}, v_2, \ldots, v_{n})$ of $T_\gamma M$ represents the orientation of $M$. Thus we assign $(v_2, \ldots, v_{\mu(\gamma)})$ as a representation of the orientation for $U_\gamma\cap \partial M$.
\par  Similarly, for $\delta\in Cr^-(f)$, given an orientation $(v_1, \ldots, v_{\mu(\delta)})$ of $U_\delta$ in $T_\delta U_\delta$, we fix an orientation $(v_{in}, v_{\mu(\delta)+2}, \ldots, v_n)$ for $S_\delta$ in $T_\delta S_\delta$ such that $(v_1, \ldots, v_{in},$ $\ldots, v_n)$ represents the orientation of $M$ at $\delta$, where $v_{in}$ is a vector heading inward of $M$ and $\{v_i\}_{i=\mu(\delta)+2, \ldots, n}$ forms a basis of $T_\delta(S_\delta\cap \partial M)$. Then we induce the orientation $(v_{\mu(\delta)+2}, \ldots, v_n)$ for $S_\delta\cap \partial M$ at $\delta$. Then we will see that there is a $(-1)^{\mu(\delta)}$ difference between the orientation of $T_\delta\partial M$ and that of $(v_1, \ldots,$ $ \widehat{v}_{\mu(\delta)+1}, \ldots, v_{n})$, which is the orientation of $U_\delta$ followed by that of $S_\delta\cap \partial M$.

\subsection{Handlebody Decomposition}
\par Then we introduce the handlebody decomposition of manifolds with boundary with respect to the unstable manifolds. As preparation, the following conclusion is obvious.
\begin{lem}
Let $f$ be a horn-end transverse Morse function and $g$ be a generic Riemannian metric. If $k_1\leq k_2$, then there is no non-constant gradient trajectory from $p_1\in Cr_{k_1}(f)$ to $p_2\in Cr_{k_2}(f)$.
\end{lem}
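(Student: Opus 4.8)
The plan is to reduce the claim to the boundary analogue of the classical fact that a gradient flow line joins a critical point only to one of strictly smaller Morse index, which in turn rests on a transversality dimension count. First I would record that along any nonconstant gradient trajectory $f$ is strictly decreasing --- on $M^\circ$ since $\tfrac{d}{dt}f(\varphi(t))=-|\textrm{grad}\,f|_g^2$, and on a boundary component $N_i$ since there $f$ agrees with $f|_{N_i}$ and $X_f=-\textrm{grad}(f|_{N_i})$ --- so the image of a nonconstant trajectory from $p_1$ to $p_2$ is an embedded $1$-dimensional submanifold of $M$, every point of which lies in $U_{p_1}\cap S_{p_2}$ (translating the trajectory in time shows that each of its points flows backward to $p_1$ and forward to $p_2$). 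Hence it suffices to prove that, whenever a nonconstant connecting trajectory exists, $\dim(U_{p_1}\cap S_{p_2})$ is defined and equals $\mu(p_1)-\mu(p_2)$: it would then have to be $\ge 1$, forcing $k_1>k_2$.

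Next I would classify where such a trajectory can live. From the lemma above stating that an interior trajectory meets $\partial M$ only at time $\pm\infty$ --- and then only at a critical point of $f|_{N_i}$, with the sign of the flow in the $r$-direction forcing the relevant component to be positive at the $t\to-\infty$ end and negative at the $t\to+\infty$ end --- together with the flow-invariance of each $N_i$, one sees that a nonconstant trajectory from $p_1$ to $p_2$ is of exactly one of two types: (i) all its points other than the endpoints lie in $M^\circ$, and then $p_1\in Cr^\circ(f)\cup Cr^+(f)$ while $p_2\in Cr^\circ(f)\cup Cr^-(f)$; or (ii) it lies entirely in a single boundary component $N_i$, is a gradient trajectory of $f|_{N_i}$, and has $p_1,p_2$ both critical points of $f|_{N_i}$ (so both in $Cr^+(f)$ if $N_i$ is positive, both in $Cr^-(f)$ if negative). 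For every pair $(p_1,p_2)$ not fitting one of these two patterns there is no connecting trajectory at all and the lemma holds vacuously.

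It remains to compute $\dim(U_{p_1}\cap S_{p_2})$ in the two surviving types, using the Proposition describing the stable and unstable manifolds together with the transversality of stable and unstable manifolds, read inside $M^\circ$ in type (i) and inside $N_i$ in type (ii) (arranged by choosing the interior metric and $g_{\partial M}$ generically). In type (i): $U_{p_1}$ is an open ball of dimension $\mu(p_1)$ lying in $M^\circ$ when $p_1\in Cr^\circ(f)$, and a half-ball of dimension $\mu(p_1)$ with interior in $M^\circ$ when $p_1\in Cr^+(f)$; dually $S_{p_2}$ is a ball or half-ball of dimension $n-\mu(p_2)$ whose interior lies in $M^\circ$, in both of the cases $p_2\in Cr^\circ(f)$ and $p_2\in Cr^-(f)$. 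Transverse intersection in the $n$-manifold $M^\circ$ therefore has dimension $\mu(p_1)+(n-\mu(p_2))-n=\mu(p_1)-\mu(p_2)$. In type (ii): using $\mu(\gamma)=\mu_{f|_{N_i}}+1$ for $\gamma\in Cr^+(f)$ and $\mu(\delta)=\mu_{f|_{N_i}}$ for $\delta\in Cr^-(f)$, the unstable manifold of $p_1$ and the stable manifold of $p_2$ taken inside $N_i$ have dimensions summing to $\mu(p_1)+\big((n-1)-\mu(p_2)\big)$ in either sign case, so transverse intersection in the $(n-1)$-manifold $N_i$ again has dimension $\mu(p_1)-\mu(p_2)$. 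Thus $\dim(U_{p_1}\cap S_{p_2})=k_1-k_2$ whenever a nonconstant trajectory exists, forcing $k_1\ge k_2+1$ and contradicting $k_1\le k_2$.

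The only real difficulty is the bookkeeping: one must keep straight, for $\gamma\in Cr^+(f)$ and $\delta\in Cr^-(f)$, which face of the half-ball $U_\gamma$ (resp.\ $S_\delta$) sits in $\partial M$ and which part lies in $M^\circ$, and apply transversality in the correct stratum, so that the $\pm1$ shifts built into the boundary index conventions cancel and leave exactly $\mu(p_1)-\mu(p_2)$ in every case. With that settled, the dimension inequality --- and hence the lemma --- is immediate.
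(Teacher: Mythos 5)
Your argument is correct, and in fact the paper gives no proof at all here --- the lemma is introduced with ``the following conclusion is obvious'' --- so your write-up simply supplies the standard justification the authors had in mind: strict monotonicity of $f$ along nonconstant trajectories, the classification of connecting orbits into interior-passing ones (from $Cr^\circ\cup Cr^+$ to $Cr^\circ\cup Cr^-$) and purely boundary ones, and the Morse--Smale dimension count $\dim\bigl(U_{p_1}\cap S_{p_2}\bigr)=\mu(p_1)-\mu(p_2)\ge 1$, where the $\pm1$ shifts in the boundary index conventions cancel exactly as you compute. The one point worth making explicit is that in your type (ii) the transversality must be imposed inside the stratum $N_i$, i.e.\ $g_{\partial M}$ should be Morse--Smale for $f|_{N_i}$; this is a (natural) reading of the paper's blanket assumption that stable and unstable manifolds of all critical points meet transversely, and you flag it correctly, so there is no gap.
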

\par Now we construct a sequence of subsets of $M$ inductively. Take $M^{-1}$ as $\partial M$. For each $p\in Cr_0(f)=Cr_0^\bullet(f)$, we pick a local coordinate  $\{x_1, x_2, \ldots, x_n\}$ centered at $p$. Denote by $B_k(p,\epsilon)$ an $n$-dimensional open disk centered at $p$, i.e. $B_n(p,\epsilon)\cong \{(x_1, x_2, \ldots, x_n)|x_1^2 +x_2^2+\ldots +x_n^2<\epsilon\}$. When $\epsilon$ is small enough, we have:
\begin{itemize}
  \item The closure of $B_n(p,\epsilon)$ are mutually disjoint;
  \item The boundary of $B_n(p,\epsilon)$ are all smooth in $M\setminus\partial M$ and transversal to the gradient trajectories.
\end{itemize}
\par Take $M^0=M^{-1}\cup \bigcup_{q\in Cr_0^\bullet(f)}B_n(q,\epsilon)$, which is the second term in our sequence of subsets of $M$. Assume that we have already constructed $M^{k-1} (1\le k\le n)$, and that each term of $M^s(0\leq s\leq k-1)$ in the sequence $\partial M = M^{-1}\subseteq M^0 \subseteq \ldots \subseteq M^{k-1}$ satisfies the following properties:
\begin{itemize}
  \item $\partial M^s$ are all smooth in $M\setminus\partial M$ and transversal to the gradient trajectories;
  \item $M^{s-1}\cup \bigcup_{q\in Cr_s^\bullet(f)}U_q$ is a deformation retract of $M_s$.
\end{itemize}
\par In this case, we construct $M^k$ as follows. For each interior critical point $p\in Cr_k^\circ(f)$, we have $U_p\setminus M^{k-1}$ is diffeomorphic to an $k$-dimensional closed ball because of the transversality conditions as well as the properties of $M^{k-1}$. There exists a tubular neighborhood $T_p$ for each $U_p\setminus M^{k-1}$, such that $T_p$ is diffeomorphic to $U_p\setminus M^{k-1} \times B_{n-k}$ and that the boundary of the union of $M^{k-1}$ and $T_p$ is smooth and transverse to the gradient vector field. For each critical point $\gamma \in Cr_k^+(f)$ on positive boundary of $M$, we have $U_{\gamma}\setminus M^{k-1}$ is diffeomorphic to $\{(x_1,x_2,\cdots ,x_{k-1},r)\mid x_1^2+x_2^2+\cdots + r^2\leq 1-\delta, r\geq 0\}$, where $\delta$ is a small enough positive real number. Then there exists a tubular neighborhood $T_{\gamma}$ for each $U_{\gamma}\setminus M^{k-1}$, such that $T_{\gamma}$ is diffeomorphic to $U_{\gamma}\setminus M^{k-1} \times B_{n-k}$ and that the boundary of the union of $M^{k-1}$ and $T_{\gamma}$ is smooth in $M\setminus\partial M$ and transverse to the gradient vector field. Moreover, we can take the tubular neighborhoods thin enough so that they are mutually disjoint. We take $M^k=M^{k-1}\cup \bigcup_{q\in Cr_k^\bullet(f)}T_q$. We point out that the above properties also hold true for $M^k$:
\begin{itemize}
  \item $\partial M^k$ are all smooth in $M\setminus\partial M$ and transversal to the gradient trajectories;
  \item $M^{k-1}\cup \bigcup_{q\in Cr_k^\bullet(f)}U_q$ is a deformation retract of $M^k$.
\end{itemize}
\par In fact, the reason why we do not include the critical points in $Cr^-(f)$ is that, for $\delta\in Cr^-(f)$, $U_{\delta}\subset \partial M$. In this case, for a thin enough tubular neighborhood $T_{\delta}$ of $U_{\delta}$, $M_{k-1}\cup T_{\delta}$ does not change the homotopy type of $M_{k-1}$. We call the sequence of subsets of $M$ the handlebody decomposition of $M$.
\par Here we describe an example to illustrate the decomposition. Consider the deformed ball $D$ of dimension two in Figure \ref{Fig1}. Let $h$ be a transverse Morse function on $D$ such that $h$ is the height function out of the collar neighborhood of the boundary and the boundary is a positive boundary. In this case, we know that there are five critical points on $D$, which are denoted by $p_1, p_2, p_3, \gamma_1, \gamma_2$ respectively. They are either internal critical points or critical points on positive boundary. Then the process of constructing the handlebody decomposition is given as follow.
\begin{figure}[htbp]
\centering
\subfigure[Step 1]{
\label{Fig1-1}
\includegraphics[width=0.3\textwidth]{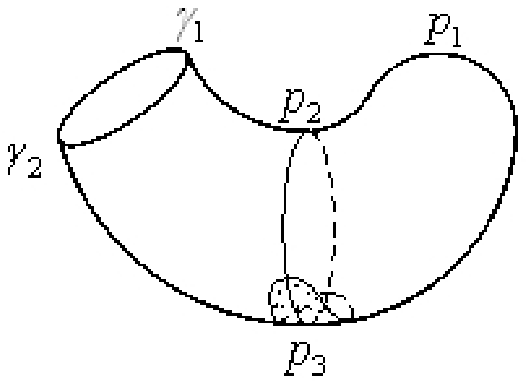}}
\subfigure[Step 2]{
\label{Fig1-2}
\includegraphics[width=0.3\textwidth]{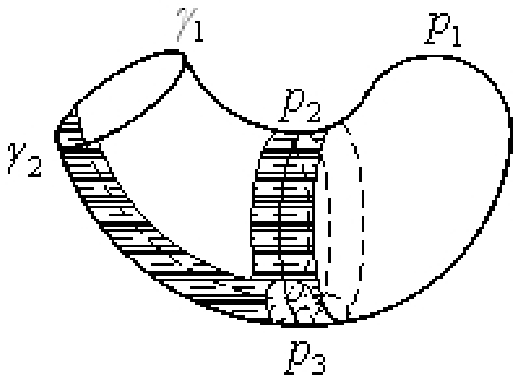}}
\subfigure[Step 3]{
\label{Fig1-3}
\includegraphics[width=0.3\textwidth]{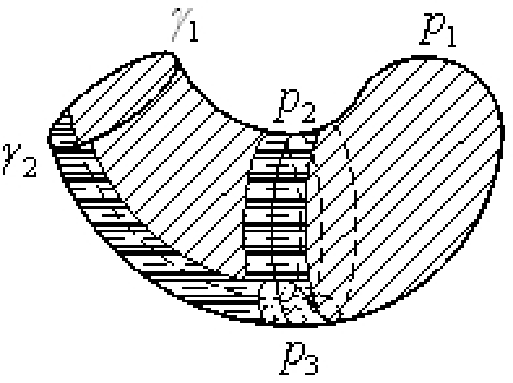}}
\caption{Handlebody Decomposition}
\label{Fig1}
\end{figure}

\section{The Relative Morse Complex}
\par In this section, we define the relative Morse complex, and then prove that its homology is isomorphic to the relative singular homology $H_*(M, \partial M)$.
\begin{defi}
We define the moduli space of all gradient trajectories between the critical points $q,q'\in Cr^\bullet(f)$ as
\[
\mathcal{M}(q,q'):=\left\{\varphi:\mathbb{R}\rightarrow M|\frac{d\varphi}{dt} = X_f\circ\varphi, \lim_{t\rightarrow-\infty}\varphi(t)=q,\lim_{t\rightarrow+\infty}\varphi(t) =q'\right\}/\sim,
\]
where the equivalence relation $\sim$ is defined as follow: two maps $\varphi_1$ and $\varphi_2$ are considered equivalent if and only if there $\exists t_0\in \mathbb{R}$ such that $\varphi_1(t_0)=\varphi_2(0)$.
\end{defi}
Having fixed generic horn-end transverse Morse functions and horn-end Riemannian metric, we have the following proposition:
\begin{prop}
$\dim\mathcal{M}(q,q')=\mu(q)-\mu(q')-1$.
\end{prop}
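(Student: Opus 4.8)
The plan is to identify $\mathcal{M}(q,q')$ with the set of gradient flow lines from $q$ to $q'$, i.e.\ with the quotient $(U_q\cap S_{q'})/\mathbb{R}$ by the flow of $X_f$, and then to read off its dimension from the dimensions of the stable and unstable manifolds recorded in the earlier proposition on stable and unstable manifolds together with the transversality hypothesis on $g$. Indeed, a trajectory $\varphi$ with $\lim_{t\to-\infty}\varphi(t)=q$ and $\lim_{t\to+\infty}\varphi(t)=q'$ has image contained in $U_q\cap S_{q'}$; conversely every point of $U_q\cap S_{q'}$ lies on exactly one such trajectory, and two trajectories are $\sim$-equivalent precisely when they differ by a time translation, so the quotient map gives a bijection $\mathcal{M}(q,q')\cong (U_q\cap S_{q'})/\mathbb{R}$. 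Assume $q\ne q'$ (the case $q=q'$ yields $\mathcal{M}(q,q')=\varnothing$). Then no point of $U_q\cap S_{q'}$ is a critical point, the flow-invariant set $U_q\cap S_{q'}$ carries $X_f$ as a nonvanishing tangent vector field, and $f$ — or $f|_{N_i}$, when the trajectories lie in a boundary component $N_i$ — strictly decreases along each flow line since $df(X_f)<0$ away from critical points. Hence $f$ restricts to a submersion $U_q\cap S_{q'}\to (f(q'),f(q))$, and for any value $c$ in that interval the level set $U_q\cap S_{q'}\cap f^{-1}(c)$ is a codimension-one submanifold meeting every flow line exactly once; this realizes $\mathcal{M}(q,q')$ as a smooth manifold of dimension $\dim(U_q\cap S_{q'})-1$, so it remains to prove $\dim(U_q\cap S_{q'})=\mu(q)-\mu(q')$.

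For this I would distinguish cases according to the types of $q$ and $q'$, using the earlier lemma that a trajectory meeting $\partial M$ stays in $\partial M$ and can approach a component $N_i$ as $t\to-\infty$ only when $k_i<0$. If $q,q'\in Cr^\circ(f)$, then $U_q$ and $S_{q'}$ are transverse submanifolds of the open $n$-manifold $M\setminus\partial M$ of dimensions $\mu(q)$ and $n-\mu(q')$, whence $\dim(U_q\cap S_{q'})=\mu(q)+(n-\mu(q'))-n=\mu(q)-\mu(q')$. If $q=\gamma\in Cr^+(f)$ and $q'\in Cr^\circ(f)$, then $U_\gamma$ is a half-ball of dimension $\mu(\gamma)$ whose flat face $U_\gamma\cap\partial M$ lies in $\partial M$, while $S_{q'}\subset M\setminus\partial M$; so the intersection is contained in the open part $U_\gamma\setminus\partial M$, a genuine $\mu(\gamma)$-dimensional submanifold of $M\setminus\partial M$, and transversality again gives $\dim(U_\gamma\cap S_{q'})=\mu(\gamma)-\mu(q')$. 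If $q,q'\in Cr^+(f)$ lie on the same component $N_i$ (so $k_i>0$), the explicit form $X_f=k_i r\,\partial_r-\textrm{grad}(f|_{N_i})$ on the collar forces every trajectory between them to remain in $N_i$, so the computation takes place inside the closed $(n-1)$-manifold $N_i$ with $f|_{N_i}$: there the unstable and stable manifolds have dimensions $\mu(q)-1$ and $(n-1)-(\mu(q')-1)$, the transversality in $M$ restricts to transversality in $N_i$ along $U_\gamma\cap N_i$, and one gets $\dim(U_q\cap S_{q'})=\mu(q)-\mu(q')$ once more. Finally, the remaining possibilities are vacuous: if $q\in Cr^\circ(f)$ and $q'\in Cr^+(f)$ then $U_q\subset M\setminus\partial M$ while $S_{q'}\subset\partial M$; and if $q,q'\in Cr^+(f)$ lie on distinct components, the $r$-coordinate of a positive collar only increases along the flow, so no trajectory can run from one positive boundary to another; in either case $\mathcal{M}(q,q')=\varnothing$ and the identity holds trivially.

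Putting the non-empty cases together, $\dim(U_q\cap S_{q'})=\mu(q)-\mu(q')$ throughout, and the passage to the $\mathbb{R}$-quotient (equivalently, the level-set slice) drops the dimension by one, which proves $\dim\mathcal{M}(q,q')=\mu(q)-\mu(q')-1$. I expect the only point genuinely needing care to be the case $q\in Cr^+(f)$: there $U_\gamma$ is a manifold with boundary, so the transverse-intersection dimension formula cannot be invoked blindly. The resolution is exactly the observation used above — whenever $\mathcal{M}(q,q')\ne\varnothing$ the relevant intersection avoids the flat face of the half-ball, because either the companion stable manifold lies in $M\setminus\partial M$ or else the whole configuration sits inside $N_i$ — so the count reduces to an ordinary transverse intersection, after which the argument is formally identical to the closed-manifold case; the smoothness of the slice $U_q\cap S_{q'}\cap f^{-1}(c)$ may be taken as standard.
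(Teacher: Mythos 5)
Your proof is correct and takes essentially the same route as the paper: identify $\mathcal{M}(q,q')$ with $(U_q\cap S_{q'})/\mathbb{R}$, use the transversality of stable and unstable manifolds to get $\dim(U_q\cap S_{q'})=\mu(q)-\mu(q')$, and drop one dimension for the reparametrization action. Your case analysis for critical points in $Cr^+(f)$ (where $U_\gamma$ is a half-ball, so the intersection must be checked to avoid the boundary face or to sit entirely inside $N_i$) spells out a point the paper's one-line dimension count leaves implicit, but it is a refinement of the same argument rather than a different proof.
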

\begin{proof}
Since the elements in
\[
\widehat{\mathcal{M}}(q,q')=\left\{\varphi:\mathbb{R}\rightarrow M|\frac{d\varphi}{dt} = X_f\circ\varphi, \lim_{t\rightarrow-\infty}\varphi(t)=q,\lim_{t\rightarrow+\infty}\varphi(t) =q'\right\}
\]
is of 1-1 correspondence with the points in $U_q\bigcap S_{q'}$ (and it keeps the topology), and since the two submanifolds are transverse to each other,
\[
\dim\widehat{\mathcal{M}}(q,q')=\dim U_q\cap S_{q'}=\dim U_q+\dim S_{q'}-n=\mu(q)-\mu(q').
\]
As $\sim$ is a dimension-one action on $\widehat{\mathcal{M}}(q,q')$, we have
\[
\dim\mathcal{M}(q,q')=\mu(q)-\mu(q')-1.
\]
\end{proof}

\begin{defi}\label{d1}
We define a boundary operator as follow:
\[
\begin{aligned}
\partial: &\bigoplus_{q\in Cr_k^\bullet(f)}\mathbb{Z}[q]&\longrightarrow&\bigoplus_{q'\in Cr_{k-1}^\bullet(f)}\mathbb{Z}[q']\\
&\quad\quad[q]&\longmapsto&\sum_{q'\in Cr^\bullet_{k-1}(f)}\sharp\mathcal{M}(q,q')[q']
\end{aligned}.
\]
\end{defi}
\par The moduli spaces appear above are all of dimension 0, so they are all union of finite number of points, and here $\sharp$ means the number of points counted with sign, which is related to the orientation. The sign of a gradient trajectory $T$ is determined in the following way. At $x\in T$, let $e$ be the direction of $-X_f$, $(v_1, \ldots, v_{k-1}, e)$ be the orientation of $U_q$ in $T_xU_q$, and $(v_1, \ldots, v_{k-1}, e, v_{k+1}, \ldots, v_{n})$ be the orientation of $M$ such that $v_{k+1}, \ldots, v_{n}$ lie in $T_xS_{q'}$, we endow $T$ with sign `+' if $(e, v_{k+1}, \ldots, v_n)$ accords with the orientation of $S_{q'}$ in $T_xS_{q'}$, but with sign `-' if it contradicts with the orientation of $S_{q'}$. It is easy to see that this method of determining the sign is well-defined.
\begin{prop}
$\partial\partial=0$.
\end{prop}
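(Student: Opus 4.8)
The plan is to follow the standard Morse-theoretic argument: compute $\partial\partial[q]$ for $q\in Cr_k^\bullet(f)$ by collecting, for each $q''\in Cr_{k-2}^\bullet(f)$, the coefficient $\sum_{q'\in Cr_{k-1}^\bullet(f)}\sharp\mathcal{M}(q,q')\cdot\sharp\mathcal{M}(q',q'')$, and show this integer vanishes by identifying it with the signed count of the boundary points of the compactified one-dimensional moduli space $\overline{\mathcal{M}}(q,q'')$. Since the boundary of a compact oriented one-manifold has vanishing signed count, the conclusion follows once the boundary is correctly described and the orientations are matched.

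The key steps, in order: (1) By Proposition~3, $\mathcal{M}(q,q'')$ has dimension $k-(k-2)-1=1$, and $\mathcal{M}(q,q')$, $\mathcal{M}(q',q'')$ have dimension $0$ for the relevant indices; invoke the (assumed) transversality of the generic horn-end metric so that all these moduli spaces are smooth manifolds of the expected dimension. (2) Establish the compactification: $\overline{\mathcal{M}}(q,q'')$ is a compact one-dimensional manifold with boundary, and the boundary consists precisely of the broken trajectories $\mathcal{M}(q,q')\times\mathcal{M}(q',q'')$ over all intermediate $q'\in Cr_{k-1}^\bullet(f)$. Here I must check that no new phenomena arise from the boundary $\partial M$: a trajectory cannot break at a point of $Cr^-(f)$, because such critical points are never in $Cr^\bullet(f)$ and, more importantly, by Lemma~3 (really Lemma~2 on trajectories hitting $\partial M$) a trajectory through the interior reaching $\partial M$ only does so asymptotically at a critical point of some $f_{N_i}$, while on a positive boundary the relevant indices still force the break to occur at a point of $Cr^\bullet(f)$; I also use Lemma~4 (monotonicity of $k_i$) to rule out trajectories that would leave and re-enter. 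One should also note that the horn-end form $X_f = k_i r\,\partial_r - \mathrm{grad}(f|_{N_i})$ means the collar is preserved by the flow in the appropriate direction, so no sequence of interior trajectories can converge to something partly on $\partial M$ except in the controlled way above. (3) Orientation bookkeeping: using the sign convention fixed just before the statement (via the splitting $T_xU_q \oplus T_xS_{q'}$ along the flow direction $e=-X_f$), verify that the two ends of each component of $\overline{\mathcal{M}}(q,q'')$ carry opposite signs, so that each rigid broken trajectory $(\varphi_1,\varphi_2)\in\mathcal{M}(q,q')\times\mathcal{M}(q',q'')$ is counted with sign $\varepsilon(\varphi_1)\varepsilon(\varphi_2)$ and these cancel in pairs. (4) Conclude $\sum_{q'}\sharp\mathcal{M}(q,q')\sharp\mathcal{M}(q',q'')=0$ for every $q''$, hence $\partial\partial=0$.

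The main obstacle is step~(2)–(3) near the boundary: one must be certain that gluing and compactness behave as in the closed case despite the non-smoothness of $X_f$ across $\partial M$ and the presence of boundary critical points. Concretely, the delicate point is that a one-parameter family of interior gradient trajectories from $q$ to $q''$ could a priori degenerate by pushing mass onto $\partial M$; the horn-end geometry ($g|_{(0,1)\times\partial M}=g_{\partial M}+\tfrac1r dr\otimes dr$) is designed precisely so that the "horn" is at infinite distance and such escape is prevented or happens only asymptotically at a boundary critical point, which for a \emph{positive} boundary lies in $Cr^\bullet(f)$ and is accounted for in the sum, while for a \emph{negative} boundary the index computation (using $\mu(\delta)=\mu_{f|_{N_j}}$ versus $\mu(\gamma)=\mu_{f|_{N_i}}+1$) shows the corresponding broken configuration has the wrong dimension and does not occur. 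I would phrase this carefully, perhaps isolating it as a compactness lemma, and then the orientation check reduces to the same local linear-algebra computation as in Fukaya's and Schwarz's treatments, adapted to the half-ball models of the unstable/stable manifolds from Proposition~2.
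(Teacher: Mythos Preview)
Your proposal is correct and follows essentially the same route as the paper: expand $\partial\partial[q]$, identify each coefficient with the signed count of $\partial\overline{\mathcal{M}}(q,q'')$ for the compactified one-dimensional moduli space, and conclude it vanishes. In fact you are more careful than the paper's own proof, which simply asserts the compactification formula $\partial\overline{\mathcal{M}}(q,q')=\bigcup_{q''\in Cr^\bullet_{k-1}(f)}\mathcal{M}(q,q'')\times\mathcal{M}(q'',q')$ without discussing why breaking at $Cr^-(f)$ is excluded; your observation that the unstable manifold of a negative-boundary critical point lies entirely in $\partial M$ (so the second piece of a broken trajectory could never reach a point of $Cr^\bullet(f)$) is the relevant mechanism, rather than the index comparison you suggest at the end.
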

\begin{proof}
 After a little calculation, we know that
$$\partial\partial [q]=\sum_{q''\in Cr^\bullet_{k-1}(f)}\sum_{q'\in Cr^\bullet_{k-2}(f)} \sharp\mathcal{M}(q,q'') \sharp\mathcal{M}(q'',q')[q'].$$
So we only need to prove that for any $q\in Cr^\bullet_{k}(f)$ and $q'\in Cr^\bullet_{k-2}(f)$,
$$\sum_{q''\in Cr^\bullet_{k-1}(f)} \sharp\mathcal{M}(q,q'') \sharp\mathcal{M}(q'',q')=0.$$
In fact, for a generic function $f$, the space $\mathcal{M}(q,q')$ is compactified to $\overline{\mathcal{M}}(q,q')$ such that
$$\partial\overline{\mathcal{M}}(q,q')=\bigcup_{q''\in Cr^\bullet_{k-1}(f)}\mathcal{M}(q,q'') \times\mathcal{M}(q'',q').$$
When $\mu(q)-\mu(q')=2$, we know that the dimension of $\overline{\mathcal{M}}(q,q')$ is $1$, so $\sharp \partial \overline{\mathcal{M}}(q,q')=0$.
\end{proof}
\begin{rmk}
In the definition of the chain complex, we do NOT include the critical points on the negative boundary. This is to fit the relative homology, which we will explain in the next section.
\end{rmk}
\begin{defi}
We define the relative Morse complex $CM_*(f)$ as follow:\\
(1)\quad $CM_k(f)=\bigoplus_{q\in Cr^\bullet_k(f)}\mathbb{Z}[q]$;\\
(2)\quad The boundary operator is just as in Definition \ref{d1}.\\
\end{defi}
\par Now we prove that the homology of the relative Morse complex is isomorphic to the relative singular homology. For the handlebody decomposition given in last section, we have the following proposition.
\begin{prop}
\[
H_k(M^k,M^{k-1};\mathbb{Z})\cong \bigoplus_{q\in Cr_k^\bullet(f)}\mathbb{Z}[\sigma_q],
\]
where $\sigma_q$ is the homeomorphism from a $k$-dimensional closed ball $\overline{B^k}$ to $U_q\setminus M^k$.
\end{prop}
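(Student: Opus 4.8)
The plan is to compute the relative homology $H_*(M^k, M^{k-1})$ directly from the handlebody structure, using the two inductive properties established in Section 2.3: that $\partial M^s$ is smooth and transverse to the gradient flow, and that $M^{k-1} \cup \bigcup_{q \in Cr_k^\bullet(f)} U_q$ is a deformation retract of $M^k$. The first step is to pass from $M^k$ to this deformation retract: since deformation retractions induce isomorphisms on homology of pairs, we get
\[
H_k(M^k, M^{k-1}) \cong H_k\Bigl(M^{k-1} \cup \bigcup_{q} U_q,\; M^{k-1}\Bigr).
\]
Because each $U_q$ meets $M^{k-1}$ only in $U_q \cap M^{k-1} = U_q \setminus (U_q \setminus M^{k-1})$, which (by the discussion in the handlebody construction) is the boundary sphere/half-ball of the closed ball $\overline{U_q \setminus M^{k-1}}$, and because the tubular neighborhoods $T_q$ were taken mutually disjoint, I would use excision together with the disjointness to split the computation as a direct sum over $q \in Cr_k^\bullet(f)$.

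For a fixed interior critical point $p \in Cr_k^\circ(f)$, the relevant local pair is $(\overline{B^k}, \partial B^k)$ — the closed $k$-ball $\overline{U_p \setminus M^{k-1}}$ modulo its boundary sphere, which is attached along $M^{k-1}$. This gives $H_k(\overline{B^k}, \partial B^k; \mathbb{Z}) \cong \mathbb{Z}$, generated by the fundamental class, which corresponds to $[\sigma_p]$. For a boundary critical point $\gamma \in Cr_k^+(f)$, the analogous local pair is the closed half-ball $\overline{H^k}$ modulo $\partial H^k \cup (\text{round part of the boundary})$; I need to check that the round part is what gets attached to $M^{k-1}$ while the flat face $\partial H^k$ — which lies on $\partial M$ — is already in $M^{-1} \subseteq M^{k-1}$, so in fact the whole topological boundary of the half-ball is identified inside $M^{k-1}$, giving $H_k(\overline{H^k}, \partial \overline{H^k}; \mathbb{Z}) \cong H_k(\overline{B^k}, \partial B^k; \mathbb{Z}) \cong \mathbb{Z}$ again (the half-ball is homeomorphic to the ball, carrying its boundary to the boundary sphere). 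Summing over all $q \in Cr_k^\bullet(f)$ and noting that the degree-$k$ homology in this relative setting is concentrated in degree $k$ (no cells of other dimensions appear in $M^k / M^{k-1}$), I obtain the claimed isomorphism, with the generator attached to each $q$ being precisely the relative cycle $[\sigma_q]$ given by the characteristic map of the handle.

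The main obstacle I anticipate is the bookkeeping around the boundary critical points $\gamma \in Cr^+(f)$: one must be careful that $U_\gamma \setminus M^{k-1}$ is genuinely a half-ball whose \emph{flat} face sits inside $\partial M = M^{-1}$ and whose \emph{curved} face is the part glued to the rest of $M^{k-1}$, so that collapsing $M^{k-1}$ collapses the entire frontier of the half-ball and one recovers a sphere-like quotient with top homology $\mathbb{Z}$. This requires invoking the explicit local model $\{x_1^2 + \cdots + x_{k-1}^2 + r^2 \le 1-\delta,\ r \ge 0\}$ from the handlebody construction and the fact that the collar has the horn-end form. A secondary point to handle cleanly is the excision/disjointness argument: since the $T_q$ are mutually disjoint and each deformation retracts onto $U_q$ rel its attaching region, the pair $(M^k, M^{k-1})$ is, up to homotopy, a wedge-type union of the local pairs, and one should phrase this via the long exact sequence of the triple or directly via the isomorphism $H_k(X, A) \cong \bigoplus_q H_k(A \cup U_q, A)$ valid when the $U_q \setminus A$ are disjoint. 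Everything else — the computation $H_k(\overline{B^k}, \partial B^k) \cong \mathbb{Z}$ and the identification of the generator — is standard.
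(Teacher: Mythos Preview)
Your argument is correct and is precisely the standard handle-attachment / CW computation that the paper has in mind. In fact the paper does not supply a proof of this proposition at all: it states the result, adds a remark that the map $\sigma_q$ exists for boundary critical points because the closed half-ball is homeomorphic to the closed ball, and then immediately invokes the analogy with CW decompositions to pass to the cellular chain complex. Your sketch therefore fills in exactly what the paper leaves implicit, and the two accounts agree on the one nontrivial point (that for $\gamma\in Cr_k^+(f)$ the flat face of the half-ball lies in $\partial M\subset M^{-1}\subset M^{k-1}$, so the whole topological boundary collapses).

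Two small remarks. First, the statement as printed has $U_q\setminus M^k$, which should be $U_q\setminus M^{k-1}$; you have already silently corrected this in your argument. Second, when you pass from $H_k(M^k,M^{k-1})$ to $H_k(M^{k-1}\cup\bigcup_q U_q,\,M^{k-1})$ via the deformation retraction, you should note explicitly that the retraction can be taken to fix $M^{k-1}$ pointwise (it is the fiberwise retraction of each tubular neighborhood $T_q$ onto $U_q$), so that it is a deformation retraction of pairs; otherwise the induced map on relative homology need not be an isomorphism.
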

\begin{rmk}
On one hand, the existence of $\sigma_q$ is trivial when $q$ is an interior critical point. On the other hand, the existence of $\sigma_q$ where $q$ is a boundary critical point is guaranteed by the diffeomorphism between the closure of $k$-dimensional half-ball and the unstable manifold of $U_q\setminus M^k$, and the fact that the closure of a $k$-dimensional half-ball is homeomorphic to a $k$-dimensional closed ball.
\end{rmk}
Moreover, if we denote by $\delta_k:H_k(M^k,M^{k-1}) \to H_{k-1}(M^{k-1},M^{k-2})$ the connecting homomorphism of the triple $(M^k,M^{k-1},M^{k-2})$, we get a chain complex $(H_*(M^*,M^{*-1}),\delta_*)$. The homology of the chain complex of the handlebody decomposition is isomorphic to the singular homology, which can be proved in the similar way as in CW composition. Therefore, it suffices to prove the relative Morse homology is isomorphic to the homology of the chain complex of handlebody decomposition.
\begin{prop}
The homology of $(CM_*(f),\partial_*)$ is isomorphic to the homology of $(H_k(M^k,M^{k-1}),\delta_*)$.
\end{prop}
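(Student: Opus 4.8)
The plan is to exhibit an explicit isomorphism of chain complexes
\[
\Phi_*\colon (CM_*(f),\partial_*)\longrightarrow \bigl(H_*(M^*,M^{*-1}),\delta_*\bigr)
\]
and then pass to homology. On chain groups I would simply set $\Phi_k([q])=[\sigma_q]$ for $q\in Cr_k^\bullet(f)$. By the preceding proposition the classes $\{[\sigma_q]\}_{q\in Cr_k^\bullet(f)}$ form a $\mathbb{Z}$-basis of $H_k(M^k,M^{k-1})$, so each $\Phi_k$ is automatically an isomorphism of abelian groups; the entire content of the statement is therefore that $\Phi_*$ intertwines the differentials, i.e.
\[
\delta_k[\sigma_q]=\sum_{q'\in Cr_{k-1}^\bullet(f)}\sharp\mathcal{M}(q,q')\,[\sigma_{q'}]\qquad\text{for every }q\in Cr_k^\bullet(f).
\]

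To verify this, recall that $\delta_k$ is the composite $H_k(M^k,M^{k-1})\to H_{k-1}(M^{k-1})\to H_{k-1}(M^{k-1},M^{k-2})$ from the triple $(M^k,M^{k-1},M^{k-2})$. Representing $[\sigma_q]$ by the relative cycle $\sigma_q\colon(\overline{B^k},\partial B^k)\to(M^k,M^{k-1})$, the class $\delta_k[\sigma_q]$ is carried by the attaching sphere $\sigma_q|_{\partial B^k}$ viewed in $M^{k-1}$. Using the deformation retraction of $M^{k-1}$ onto $M^{k-2}\cup\bigcup_{q'\in Cr_{k-1}^\bullet(f)}U_{q'}$, which is one of the defining properties of the handlebody decomposition, I would push this attaching sphere into that subspace and then collapse onto the single cell attached at a given $q'$; the coefficient of $[\sigma_{q'}]$ in $\delta_k[\sigma_q]$ is precisely the degree of the resulting map $\partial B^k\to M^{k-1}/\bigl(M^{k-2}\cup\bigcup_{q''\ne q'}\overline{U_{q''}}\bigr)\simeq S^{k-1}$. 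The key geometric input, to be established next, is that this degree equals the signed count of gradient trajectories from $q$ to $q'$: the attaching sphere of $\overline{U_q}$ meets the stable manifold $S_{q'}$ transversally in the level region between $q$ and $q'$ in exactly the points lying on the (finitely many, isolated, rigid) elements of $\mathcal{M}(q,q')$, so the degree is $\sharp\mathcal{M}(q,q')$. This is the manifold-with-boundary analogue of the classical identification of the cellular/handlebody differential with the Morse differential, and it rests on the same transversality and on the compactness statement $\partial\overline{\mathcal{M}}(q,q')=\bigcup_{q''}\mathcal{M}(q,q'')\times\mathcal{M}(q'',q')$ already invoked to prove $\partial\partial=0$.

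The step I expect to be the main obstacle is the orientation bookkeeping: checking that the degree above carries the sign dictated by the conventions fixed in Section 2.2 and in the definition of $\partial$. One must orient each cell $\sigma_q$ compatibly with the orientation of $U_q$ — and, for $q\in Cr^+(f)$, compatibly with the half-ball identification $\partial H^k\leftrightarrow U_q\cap\partial M$ — orient its attaching sphere as the boundary of that cell, and confirm that collapsing onto $U_{q'}$ reproduces the $+$/$-$ rule defining $\sharp\mathcal{M}(q,q')$, i.e. that matching the frame $(v_1,\ldots,v_{k-1},e)$ along a trajectory against the induced co-orientation of $S_{q'}$ gives the same sign. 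The interior-to-interior case is the computation of Fukaya/Schwarz; the genuinely new cases involve positive-boundary critical points, where the inward vector $v_{in}$ occupies one frame slot and one must check the half-ball cells are glued into the handlebody with the orientation making the signs consistent. I would also remark that critical points of $Cr^-(f)$ never enter: their unstable manifolds lie in $\partial M$ and contribute no handle, so neither a generator nor a trajectory is lost, which is exactly why the two complexes have the same underlying groups. Once the sign is pinned down in all cases, $\Phi_*$ is a chain isomorphism, and taking homology yields the proposition.
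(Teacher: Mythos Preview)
Your proposal is correct and follows essentially the same route as the paper: identify $[q]$ with $[\sigma_q]$ and verify that $\delta_k$ agrees with $\partial_k$ by computing each coefficient as the signed intersection of the attaching sphere $\sigma_q(\partial\overline{B^k})$ with the stable manifold $S_{q'}$, with the orientation bookkeeping as the only real work. The paper's argument differs only in presentation, treating $q\in Cr_k^\circ(f)$ and $q\in Cr_k^+(f)$ separately and, in the latter case, explicitly splitting the attaching sphere into an interior piece and a collar piece diffeomorphic to $(\sigma_\gamma(\partial\overline{B^k})\cap\partial M)\times[0,1)$ to separate the contributions from interior $q'$ and positive-boundary $q'$.
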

\begin{proof}
\par We know from above that $CM_*(f)\cong H_k(M^k,M^{k-1})$ by identifying a critical point $q$ with $\sigma_q$. So all we need to prove is that the connecting homomorphisms of the two complexes are equivalent.
\par For $p\in Cr_k^\circ(f)$, the dimension of the manifold $S_{p'}\cap \sigma_p(\partial \overline{B^k})$ is zero when $p'\in Cr_{k-1}^\circ(f)$. At point $x\in S_{p'}\cap \sigma_p(\partial \overline{B^k})$, let $(u_1, \ldots, u_k)$ be a basis of $T_xU_p$ at $x$ such that $u_k$ is the direction of gradient flow of $f$ at $x$ and $(u_1, \ldots, u_{k-1})$ form a basis of $T_x\sigma_p(\partial \overline{B^k})$. When $(u_1, \ldots, u_k)$ represents the orientation of $U_p$, it induces an orientation $(u_1, \ldots, u_{k-1})$ of $\sigma_p(\partial \overline{B^k})$. Let $(v_1, \ldots, v_{n-k+1})$ be a basis of $T_xS_{p'}$ such that it represents the orientation of $S_{p'}$. Then we endow $x$ with an orientation(sign) `+' if $(u_1, \ldots, u_k, v_1, \ldots, v_{n-k+1})$ accords with the orientation of $M$, and vice versa. In this way, we are able to give each point in the dimension-zero manifold $S_{p'}\cap \sigma_p(\partial \overline{B^k})$, and thus to define the cardinality $\sharp (S_{p'}\cap \sigma_p(\partial \overline{B^k}))$. So we have, by omitting the part lying in $\partial M$,
\[
\begin{aligned}
\delta_k[\sigma_p]&=\sum_{p'\in Cr_{k-1}^\circ(f)}\sharp (S_{p'}\cap \sigma_p(\partial \overline{B^k}))[\sigma_{p'}]\\
&=\sum_{p'\in Cr_{k-1}^\circ(f)}\sharp \mathcal{M} (p,p')[\sigma_{p'}]
\end{aligned},
\]
where $\overline{B^k}$ is a $k$-dimensional closed ball.
By saying $p'\in Cr_{k-1}^\circ(f)$, we only embrace the interior critical points in the equation. However, as $\sharp \mathcal{M} (p,\gamma')$ are always zero for boundary critical points $\gamma'\in Cr_{k-1}^+(f)$, we have
\[
\delta_k[\sigma_p]=\sum_{q'\in Cr_{k-1}^\bullet(f)}\sharp \mathcal{M}(p,q')[\sigma_{q'}].
\]
\par For critical point $\gamma\in Cr_k^+(f)$, the boundary of $\sigma_{\gamma}(\partial \overline{B^k})$ can be divided into two parts: the part contained in $M^{k-2}\cup \bigcup_{p\in Cr_{k-1}^\circ(f)}T_{p}$ and the part contained in $M^{k-1}\setminus(M^{k-2}\cup \bigcup_{p\in Cr_{k-1}^\circ(f)}T_{p})$, which are denoted by $\partial_1\sigma_{\gamma}$ and $\partial_2\sigma_{\gamma}$ respectively. We have $\delta_k[\sigma_p]=[\partial_1\sigma_{\gamma}] + [\partial_2\sigma_{\gamma}]$. We determine the signs of points in $S_p\cap \sigma_{\gamma}(\partial \overline{B^k})$ in the same way as we did for interior critical points. Then
\[
[\partial_1\sigma_{\gamma}]=\sum_{p'\in Cr_{k-1}^\circ(f)}\sharp (S_p'\cap \sigma_{\gamma}(\partial \overline{B^k}))[\sigma_{p'}]
\]
\[
=\sum_{p'\in Cr_{k-1}^\circ(f)}\sharp \mathcal{M} (\gamma,p')[\sigma_{p'}].
\]
Meanwhile, we notice that points in $\partial_2\sigma_{\gamma}$ are those whose 'end point` lies in the closure of the unstable manifolds of boundary critical points, i.e. the gradient trajectories passing through these points will approach a point in the closure of the unstable manifolds of boundary critical points when time approaches the positive infinity. Moreover, due to the properties of handlebody decomposition, we have $\partial_2\sigma_{\gamma}$ is diffeomorphic to $(\sigma_{\gamma}(\partial \overline{B^k})\cap \partial M)\times [0,1)$. Let $\varphi$ be the diffeomorphism, and $t$ be the variable of the term $[0,1)$. For each $x\in (\sigma_{\gamma}(\partial \overline{B^k})\cap \partial M)\times [0,1)$, we fix the orientation $(u_1, \ldots, u_l)$ of $\partial_2\sigma_{\gamma}$ so that $((\varphi^{-1})_*(\frac{\partial}{\partial t}), u_1, \ldots, u_l)$ represents the orientation of $\sigma_{\gamma}(\partial \overline{B^k})$. It can be seen that $(\varphi^{-1})_*(\frac{\partial}{\partial t})$ is a vector heading `inward' from $\partial M$ to $M$. Moreover, we induce the orientation $(v_1, \ldots, v_{n-1})$ of $\partial M$ such that $(e, v_1, \ldots, v_{n-1})$ represents the orientation of $M$ where $e$ is the inward vector. Then we have
\[
[\partial_2\sigma_{\gamma}]=\sum_{\gamma'\in Cr_{k-1}^+(f)}\sharp (S_{\gamma'} \cap (\sigma_{\gamma}(\partial \overline{B^k})\cap \partial M))[\sigma_{\gamma'}]
\]
\[
=\sum_{\gamma'\in Cr_{k-1}^+(f)}\sharp \mathcal{M}(\gamma,\gamma')[\sigma_{\gamma'}].
\]
Therefore,
\[
\delta_k[\sigma_\gamma]=\sum_{q'\in Cr_{k-1}^\bullet(f)}\sharp \mathcal{M}(\gamma,q')[\sigma_{q'}].
\]
\end{proof}
This proposition leads to Theorem \ref{t1} mentioned in Introduction.
\par Now we still use the example of manifold $D$ and Morse function $h$ in Section 2 to illustrate the result. For the critical points, we have $p_1\in Cr_2^\circ(h), p_2\in Cr_1^\circ(h), p_3\in Cr_0^\circ(h), \gamma_1\in Cr_2^+(h), \gamma_2\in Cr_2^+(h)$. We define the orientations for the unstable manifolds as is shown on the critical points themselves in Figure \ref{Fig2}. (The orientation of the dimension-zero unstable manifold of $p_3$ is `+'.)
\begin{figure}[htbp]
\centering
\includegraphics[width=0.5\textwidth]{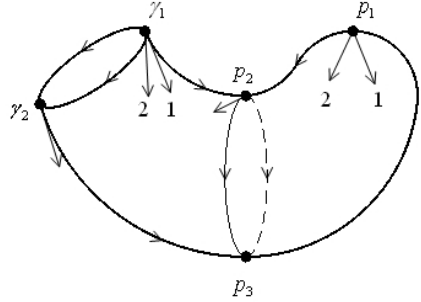}
\caption{The Critical Points}
\label{Fig2}
\end{figure}
And we define the orientation of $M$ as the same as that of $U_{p_1}$. As there are two critical points with index 2, two critical points with index 1, and one critical point with index 0, the chain groups of our Morse complex are:
\[
\cdots \rightarrow 0 \rightarrow \mathbb{Z}[p_1] \oplus \mathbb{Z}[\gamma_1] \rightarrow \mathbb{Z}[p_2] \oplus \mathbb{Z}[\gamma_2] \rightarrow \mathbb{Z}[p_3] \rightarrow 0 \rightarrow \cdots
\]
Moreover, according to the orientations, the boundary homomorphisms are
\begin{align*}
&[p_1: p_2] = 1, & [p_1: \gamma_2] = 0,\\
&[\gamma_1: p_2] = -1, & [\gamma_1: \gamma_2] = 0,\\
&[p_2: p_3] = 0, & [\gamma_2: p_3] = 1.
\end{align*}
So the homology of the relative Morse complex is
\[
H_k(CM_*(h))\cong \left\{
\begin{aligned}
\mathbb{Z}, k=1\\
0, \text{other}
\end{aligned}
\right.
\]
Therefore, we have
\[
H_k(CM_*(h))\cong H_k(D^2, \partial S^1)\cong H_k(D, \partial D).
\]

\section{Categorification}
In this part, we define an $A_\infty$-category of the relative Morse complex given above.

\subsection{Cup Product}
\par First we try to endow the relative Morse complex with product structure. Let $M$ be an $n$-dimensional oriented compact Riemannian manifold with boundary, and let $f_0$, $f_1$ and $f_2$ be three functions such that $f_0-f_1$, $f_1-f_2$ and $f_0-f_2$ are horn-end transverse Morse functions on $M$. Let $(CM_*(f_i-f_j),\partial^{i,j}_*)$ be the relative Morse complex of $f_i-f_j$. Before the description of cup product, we introduce the following definition.
\begin{defi}
For $q_1\in Cr^\bullet(f_0-f_1), q_2\in Cr^\bullet(f_1-f_2), q_3\in Cr^\bullet(f_0-f_2)$, the moduli space of $(q_1,q_2,q_3)$ is the set
\[
\mathcal{M}(q_1,q_2,q_3)=\{x\in M|\lim_{t \to -\infty}\phi^{0, 1}_t(x)=q_1, \lim_{t \to -\infty}\phi^{1, 2}_t(x)=q_2, \lim_{t \to \infty}\phi^{0, 2}_t(x)=q_3\},
\]
where $\phi^{i, j}_t$ denotes the diffeomorphism on $M$ of time $t$ generated by the gradient flow of $f_i-f_j$.
\end{defi}
In fact, as each $x\in \mathcal{M}(q_1,q_2,q_3)$ serves as a representative of the whole gradient trajectory tree with two incoming edges from $q_1,q_2$ to $x$ and one outgoing edge from $x$ to $q_3$(as is shown in Figure \ref{Fig3}), $\mathcal{M}(q_1,q_2,q_3)$ can also be treated as a moduli space of gradient trajectory trees starting from $q_1$, $q_2$ and ending at $q_3$, i.e.
\[
\mathcal{M}(q_1,q_2,q_3)\cong\left\{(x, l_1, l_2, l_3):
\begin{aligned}
& l_i:(-\infty, 0]\rightarrow M(i=1,2), l_3: [0,\infty)\rightarrow M, \\
& \lim_{t\rightarrow -\infty}l_i(t) = q_i(i=1, 2), \lim_{t\rightarrow +\infty}l_3(t) = q_3, \\
& l_1(0)= l_2(0)= l_3(0)= x\in M, \\
& l_1, l_2, l_3 \text{ are gradient trajectories of }f_0-f_1, \\
& f_1-f_2, f_0-f_2 \text{ respectively}.
\end{aligned}
\right\}.
\]
\begin{figure}[htbp]
\centering
\includegraphics[width=0.3\textwidth]{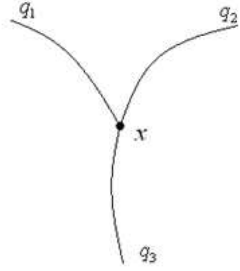}
\caption{The gradient trajectory tree corresponding to $x$}
\label{Fig3}
\end{figure}
\par In order for the moduli space to be a manifold, we need the functions to satisfy transversality conditions. To be specific, for the evaluation map
\[
E_2: U_{q_1}\times U_{q_2} \times S_{q_3} \longrightarrow M^3
\]
\[
\quad\quad\quad(x, y, z)\quad\mapsto\quad (x, y, z)
\]
we require that the image $F_2(U_{q_1}\times U_{q_2} \times S_{q_3})$ intersect transversely to $\Delta=\{(x, x, x)|x\in M\}$ in $M^3$ for all possible $q_1, q_2, q_3$. In this case, we have immediately
\[
\dim \mathcal{M}(q_1,q_2,q_3)=\mu(q_1)+\mu(q_2)-\mu(q_3) - n.
\]
As a direct corollary, we know that when $\mu(q_3)=\mu(q_1)+\mu(q_2)-n$, the dimension of $\mathcal{M}(q_1,q_2,q_3)$ is zero and thus $\mathcal{M}(q_1,q_2,q_3)$ is a union of some isolated points with orientation in $M$.
\par Now we give the definition of cup product on the chain level.
\begin{defi}
The cup product is a bilinear map
\[
m_2: CM_*(f_1-f_2)\times CM_*(f_0-f_1)\to CM_*(f_0-f_2)
\]
satisfying
\[
m_2([q_2],[q_1])=\sum_{\mu(q_3)=\mu(q_1)+\mu(q_2)-n}\sharp\mathcal{M}(q_1,q_2,q_3)[q_3].
\]
\end{defi}
In order for the cup product of the chain level induces the cup product of the homology level, we need to prove the Leibnitz rule.
\begin{thrm}
For critical point $q_1\in Cr^\bullet(f_0-f_1)$ and $q_2\in Cr^\bullet (f_1-f_2)$, we have,
\[
\partial m_2([q_2],[q_1]) + (-1)^{\mu(q_1)-n}m_2(\partial [q_2],[q_1])+m_2([q_2],\partial [q_1]) = 0.
\]
\end{thrm}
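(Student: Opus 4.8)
The plan is to prove the Leibnitz rule by the standard ``counting the boundary of a one-dimensional moduli space'' argument. I would fix critical points $q_1 \in Cr^\bullet(f_0-f_1)$ and $q_2 \in Cr^\bullet(f_1-f_2)$, and then for each $q_3 \in Cr^\bullet(f_0-f_2)$ with $\mu(q_3) = \mu(q_1) + \mu(q_2) - n - 1$, consider the one-dimensional moduli space $\mathcal{M}(q_1, q_2, q_3)$ of gradient trajectory trees. The first step is to establish that $\mathcal{M}(q_1,q_2,q_3)$ admits a compactification $\overline{\mathcal{M}}(q_1,q_2,q_3)$ which is a compact one-manifold with boundary, so that its signed boundary count vanishes. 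This requires the usual gluing and compactness analysis adapted to the horn-end setting; the key new point compared to the closed case is that trajectories cannot escape into the negative boundary, since by Lemma on trajectory lines a trajectory reaching $\partial M$ does so only at $t = \pm\infty$ at a critical point of $f_{N_i}$, and critical points on negative boundaries are excluded from $Cr^\bullet$ — this is precisely what makes the relative complex closed under $\partial$ and what keeps the broken-trajectory configurations within the allowed set of critical points.

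The second step is to identify the boundary strata. The ends of $\mathcal{M}(q_1,q_2,q_3)$ come in three types according to which edge of the tree breaks: (i) the incoming edge from $q_1$ breaks, giving $\bigcup_{q_1'} \mathcal{M}(q_1,q_1') \times \mathcal{M}(q_1',q_2,q_3)$ where $\mu(q_1') = \mu(q_1) - 1$; (ii) the incoming edge from $q_2$ breaks, giving $\bigcup_{q_2'} \mathcal{M}(q_2,q_2') \times \mathcal{M}(q_1,q_2',q_3)$ with $\mu(q_2') = \mu(q_2) - 1$; (iii) the outgoing edge to $q_3$ breaks, giving $\bigcup_{q_3'} \mathcal{M}(q_1,q_2,q_3') \times \mathcal{M}(q_3',q_3)$ with $\mu(q_3') = \mu(q_3) + 1$. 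Summing the signed counts over all such strata and over all $q_3$ of the relevant index, and using that $\sharp \partial \overline{\mathcal{M}}(q_1,q_2,q_3) = 0$, yields an identity among the composites $\partial \circ m_2$, $m_2 \circ (\partial \otimes \mathrm{id})$ and $m_2 \circ (\mathrm{id} \otimes \partial)$. The third step — really the crux of the argument — is the sign bookkeeping: one must check that the three families of boundary points contribute with the signs $+1$, $+1$, and $(-1)^{\mu(q_1)-n}$ respectively (after declaring the degree of $q_i$ to be $\mu(q_i)-n$), so that the vanishing of the total boundary count becomes exactly $\partial m_2([q_2],[q_1]) + (-1)^{\mu(q_1)-n} m_2(\partial[q_2],[q_1]) + m_2([q_2],\partial[q_1]) = 0$. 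This is done by comparing the boundary orientation induced on each stratum from the fixed orientation of $\mathcal{M}(q_1,q_2,q_3)$ — which itself comes from the orientations of $U_{q_1}, U_{q_2}, S_{q_3}$ and the transverse intersection with the diagonal $\Delta$ in $M^3$ — against the product orientation on $\mathcal{M}(\cdot,\cdot) \times \mathcal{M}(\cdot,\cdot,\cdot)$; the shift $(-1)^{\mu(q_1)-n}$ arises from moving the broken-off piece past the $U_{q_1}$ factor, whose dimension is $\mu(q_1)$, combined with the $n$-dimensional diagonal correction.

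The main obstacle I expect is this last sign computation, together with verifying that the boundary points genuinely all lie in $M^\circ \cup Cr^+$ and not on negative boundaries, so that no stratum is missing. The orientation conventions for $U_\gamma$ and $S_\delta$ at boundary critical points set up earlier (with their $v_{in}$ factors and the $(-1)^{\mu(\delta)}$ discrepancy for negative boundaries) must be used consistently; a careless choice would change the exponent in the Leibnitz rule or even spoil the cancellation. Everything else — the dimension count $\dim \mathcal{M}(q_1,q_2,q_3) = \mu(q_1) + \mu(q_2) - \mu(q_3) - n$, transversality of $E_2$ against $\Delta$, and the structure of $\mathcal{M}(q,q')$ — is already available from the preceding material, so the proof reduces to a careful but essentially routine degeneration-and-gluing analysis once the signs are pinned down.
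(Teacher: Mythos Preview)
Your strategy is exactly the paper's: fix $q_3'$ with $\mu(q_3')=\mu(q_1)+\mu(q_2)-n-1$, compactify the one-dimensional $\mathcal{M}(q_1,q_2,q_3')$, identify the three breaking strata, observe that breaking points cannot lie on negative boundaries (so all broken pieces stay in $Cr^\bullet$), and read off the Leibnitz rule from $\sharp\,\partial\overline{\mathcal{M}}(q_1,q_2,q_3')=0$. The paper then carries out the orientation comparison explicitly by tracking bases of $T_xU_{q_i}$ through the intersection convention you describe.

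One concrete slip to fix: you assign the signs $+1,\ +1,\ (-1)^{\mu(q_1)-n}$ to your strata (i), (ii), (iii), but that assignment does not reproduce the stated identity---it would put the nontrivial sign in front of $\partial m_2$ rather than $m_2(\partial[q_2],[q_1])$. In fact your own heuristic (``moving the broken-off piece past the $U_{q_1}$ factor'') points to the correct placement: the commutation occurs when the $q_2$-edge breaks, not when the outgoing edge breaks. The paper's detailed basis computation confirms this: the signs are $+1,\ (-1)^{\mu(q_1)-n},\ +1$ for the $q_1$-edge, $q_2$-edge, and outgoing-edge breaks respectively, the middle one arising as $(-1)^{r'+l'}$ with $r'-l'=n-\mu(q_1)$ from shuffling the outward vector $\tilde e$ across the basis of $U_{q_2'}$ and then of $U_{q_1}\cap U_{q_2'}$. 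Correct that attribution and your outline matches the paper's proof.
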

\begin{proof}
According to definition of boundary homomorphism of the Morse complex as well as cup product, we have
\[
m_2([q_2],\partial [q_1])=\sum_{\mu(q_1')=\mu(q_1)-1}\sharp \mathcal{M}(q_1,q_1')\sum_{\mu(q_3')=\mu(q_1')+\mu(q_2)-n}\sharp \mathcal{M}(q_1',q_2,q_3')[q_3'],
\]
\[
m_2(\partial [q_2],[q_1])=\sum_{\mu(q_2')=\mu(q_2)-1}\sharp \mathcal{M}(q_2,q_2')\sum_{\mu(q_3')=\mu(q_1)+\mu(q_2')-n}\sharp \mathcal{M}(q_1,q_2',q_3')[q_3'],
\]
and
\[
\partial m_2([q_2],[q_1])=\sum_{\mu(q_3)=\mu(q_1)+\mu(q_2)-n}\sharp \mathcal{M}(q_1,q_2,q_3)\sum_{\mu(q_3')=\mu(q_3)-1}\sharp \mathcal{M}(q_3,q_3')[q_3'].
\]
\par For all $q_1$,$q_2$ and $q_3'$ such that $\mu(q_3')=\mu(q_1)+\mu(q_2)-n-1$, we know $\mathcal{M}(q_1,q_2,q_3')$ is a dimension-one manifold with boundary. Moreover, when the gradient tree is approaching the boundary of the moduli space, one of the three gradient trajectories will break into two parts at a certain critical point, as is shown in Figure \ref{Fig4}.
\begin{figure}[htbp]
\centering
\subfigure[]{
\label{Fig4-1}
\includegraphics[width=0.2\textwidth]{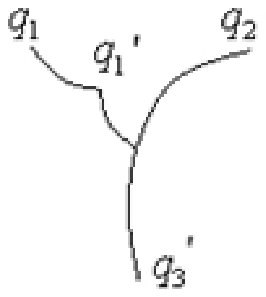}}
\subfigure[]{
\label{Fig4-2}
\includegraphics[width=0.2\textwidth]{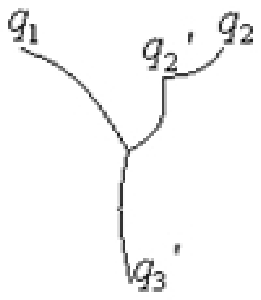}}
\subfigure[]{
\label{Fig4-3}
\includegraphics[width=0.2\textwidth]{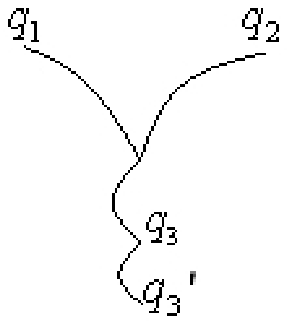}}
\caption{The Breaking of Gradient Trajectory Tree}
\label{Fig4}
\end{figure}
Notice that the breaking point must not be on negative boundaries, because there is no gradient trajectory coming out of the collar neighborhoods of negative boundaries. Therefore, we have
\[
\partial \overline{\mathcal{M}}(q_1,q_2,q_3')=\pm\sum_{\mu(q_3)=\mu(q_1)+\mu(q_2)-n} \sharp \mathcal{M}(q_1,q_2,q_3)\sharp\mathcal{M}(q_3,q_3')[q_3']
\]
\[
\quad\quad\quad\quad\quad \pm\sum_{\mu(q_1')=\mu(q_1)-1}\sharp \mathcal{M}(q_1,q_1')\sharp \mathcal{M}(q_1',q_2,q_3')[q_3']
\]
\[
\quad\quad\quad\quad\quad \pm\sum_{\mu(q_2')=\mu(q_2)-1}\sharp \mathcal{M}(q_2,q_2')\sharp \mathcal{M}(q_1,q_2',q_3')[q_3'].
\]
\par In order to determine the signs, we need to find out the difference between the orientations generated by the dimension-zero boundaries themselves and the orientations induced by $\partial \mathcal{M}(q_1,q_2,q_3')$. We first define the orientation for moduli spaces $\mathcal{M}(p_1, p_2, p_3)=U_{p_1}\cap U_{p_2}\cap S_{p_3}$. Given $p_1, p_2, p_3$, we can endow their stable manifolds and unstable manifolds with orientation as mentioned in Section 2. Then, at a point $x\in \mathcal{M}(p_1, p_2, p_3)$, assume that $(u_1, \ldots, u_r, e_1, \ldots, e_s)$ be a basis of $T_xU_{p_1}$ that reflects the orientation of $U_{p_1}$, and that $(e_1, \ldots, e_s, v_1, \ldots, v_t)$ be a basis of $T_xU_{p_2}$ that reflects the orientation of $U_{p_2}$. So, their common part $(e_1, \ldots, e_s)$ forms a basis of $T_xU_{p_1}\cap U_{p_2}$. By transversality conditions, we define the orientation of $\mathcal{M}(p_1, p_2, p_3)$ as
\[
\text{sgn}(u_1, \ldots, u_r, e_1, \ldots, e_s, v_1, \ldots, v_t)\cdot (e_1, \ldots, e_s),
\]
where $\text{sgn}(u_1, \ldots, u_{r_1}, e_1, \ldots, e_{r_2}, v_1, \ldots, v_{r_3})$ is $+1$ when $(u_1, \ldots, v_{r_3})$ is the same as the orientation of $M$ and is $-1$ when $(u_1, \ldots, v_{r_3})$ is opposite to the orientation of $M$. Then, we modify $(e_1, \ldots, e_{r_2})$ into $(e_1', \ldots, e_{r_4}', e_{1}'', \ldots, e_{r_5}'')$(where $r_2=r_4+r_5$) while not changing the orientation, such that $\{e_{1}'', \ldots, e_{r_5}''\}$ forms a basis of $T_x U_{p_1}\cap U_{p_2}\cap S_{p_3}$. We can expand them into basis $( e_1'', \ldots, e_{r_5}'', w_1, \ldots, w_{r_6} )$ of $S_{p_3}$. Then we induce the orientation of $\mathcal{M}(p_1, p_2, p_3)$ as
\[
\text{sgn}(e_1', \cdots, e_{r_4}', e_{1}'', \cdots, e_{r_5}'', w_1, \cdots, w_{r_6})\cdot (e_{1}'', \cdots, e_{r_5}'').
\]
\par Now we attempt to determine the signs in the formula of $\partial \mathcal{M}(q_1,q_2,q_3')$. First, we consider the sign of
\[
\sum_{\mu(q_1')=\mu(q_1)-1}\sharp \mathcal{M}(q_1,q_1')\sharp \mathcal{M}(q_1',q_2,q_3')[q_3'].
\]
As the sign is the difference between the orientation generated by the boundary itself and the orientation induced by $\partial \mathcal{M}(q_1,q_2,q_3')$, we consider the two orientations simultaneously. For a point $X\in \mathcal{M}(q_1,q_1') \times \mathcal{M}(q_1',q_2,q_3')$(which is a gradient tree breaking at $q_1'$), we denote by $X_1$ its part in $\mathcal{M}(q_1,q_1')$ and by $X_2$ its part in $\mathcal{M}(q_1',q_2,q_3')$. Moreover, we denote by $x_2\in M$ the corresponding point of $X_2$, i.e. $X_2$ is composed of three gradient trajectories flowing from $q_1'$ to $x_2$, from $q_2$ to $x_2$ and from $x_2$ to $q_3'$. Given the orientations of the stable and unstable manifolds of $q_1, q_2, q_3'$, we know that for the gradient trajectory tree $X$, if $X_1$ is endowed with orientation $+$, then at $x_2\in U_{q_1'}\subseteq U_{q_1}$, we have the orientations of the two manifolds are
\[
U_{q_1'}:\quad (a_1, \cdots, a_r),
\]
\[
U_{q_1}:\quad (a_1, \cdots, a_r, e),
\]
where $e$ is the `outward' vector, i.e. the vector heading to the side of $U_{q_1'}$ other than $U_{q_1}$. The reason why this happens is that, at the critical point $q_1'$, the orientation of $U_{q_1'}$ should be the orientation of $U_{q_1}$ followed by the tangent vector of the gradient trajectory flowing from $q_1$ to $q_1'$, which is actually an `outward' vector. Then we need to induce the orientations of $U_{q_1'}\cap U_{q_2}$ and $U_{q_1}\cap U_{q_2}$. In fact, we can slightly modify $e$to its projection onto $T_{x_2}U_{q_2}$ while not changing the orientations of $U_{q_1}$. The projection does not vanish because $U_{q_2}$ and $U_{q_1'}$ intersect transversely at $x_2$ but $e\notin T_{x_2}U_{q_1'}$. We still use $e$ to denote the projection. Besides, we can assume that $a_{l+1}, \ldots, a_{r}$ forms a basis of $T_{x_2}U_{q_1'}\cap U_{q_2}$ for a certain $l$. Then we can pick a specific basis of $T_{x_2}U_{q_2}$ that represents the orientation of $U_{q_2}$ and is of the form
\[
U_{q_2}: \quad (a_{l+1}, \cdots, a_{r}, e, b_1, \cdots, b_m).
\]
According to our method of inducing the orientation of the intersection of two unstable manifolds that is introduced before, we have the orientations
\[
U_{q_1'}\cap U_{q_2}: \text{sgn}(a_1, \cdots, a_r, e, b_1, \cdots, b_m)\cdot (a_{l+1}, \cdots, a_{r}),
\]
\[
U_{q_1'}\cap U_{q_2}: \text{sgn}(a_1, \cdots, a_r, e, b_1, \cdots, b_m)\cdot (a_{l+1}, \cdots, a_{r}, e).
\]
We can see that $e$ is still an outward vector. We can again modify $e$ to its projection on $S_{q_3}$ while not altering the orientation. Let $(e, c_1, \ldots, c_s)$ is a basis of $S_{q_3}$ that reflects the orientation. We know that $(a_{l+1}, \cdots, a_{r}, e, c_1, \ldots, c_s)$ forms a basis of $T_{x_2}M$ since the dimension of $U_{q_1'}\cap U_{q_2}\cap S_{q_3}$ is zero. Then the orientation of $\mathcal{M}(q_1,q_1') \times \mathcal{M}(q_1',q_2,q_3')$ at $x_2$ is
\[
\text{sgn}(a_1, \cdots, a_r, e, b_1, \cdots, b_m)\cdot \text{sgn}(a_{l+1}, \cdots, a_{r}, e, c_1, \ldots, c_s),
\]
while the orientation of $\mathcal{M}(q_1,q_2,q_3')$ is
\[
\text{sgn}(a_1, \cdots, a_r, e, b_1, \cdots, b_m)\cdot \text{sgn}(a_{l+1}, \cdots, a_{r}, e, c_1, \ldots, c_s)\cdot (e).
\]
As $e$ is the outward vector, the orientation $(e)$ induces the orientation $+$ at the boundary point, which means the orientation of $\mathcal{M}(q_1,q_1') \times \mathcal{M}(q_1',q_2,q_3')$ that is generated by itself accords with the orientation induced by $\mathcal{M} (q_1,q_2,q_3')$. In other words, for the term
\[
\sum_{\mu(q_1')=\mu(q_1)-1}\sharp \mathcal{M}(q_1,q_1')\sharp \mathcal{M}(q_1',q_2,q_3')[q_3'],
\]
the sign is $+1$.
\par Secondly, we consider the sign of the term
\[
\sum_{\mu(q_2')=\mu(q_2)-1}\sharp \mathcal{M}(q_2,q_2')\sharp \mathcal{M}(q_1,q_2',q_3')[q_3'].
\]
Similarly, for a point $Y\in \mathcal{M}(q_2,q_2') \times \mathcal{M}(q_1,q_2',q_3')$(which is a gradient tree breaking at $q_2'$), we still denote by $Y_1$ its part in $\mathcal{M}(q_2,q_2')$ and by $Y_2$ its part in $\mathcal{M}(q_1,q_2',q_3')$. Moreover, we denote by $y_2\in M$ the corresponding point of $X_2$. Then at $y_2$, we have the orientations
\[
U_{q_2'}:\quad (u_1, \cdots, u_{r'}),
\]
\[
U_{q_2}:\quad (u_1, \cdots, u_{r'}, \widetilde{e}),
\]
where $\widetilde{e}$ is still a outward vector. But the situation is different when we are inducing the orientations of $U_{q_2'}\cap U_{q_1}$ and $U_{q_2}\cap U_{q_1}$. We assume that $u_1, \ldots, u_{l'}$ forms a basis of $T_{y_2}U_{q_2'}\cap U_{q_1}$. Besides, we can modify $\widetilde{e}$ so that it lies in $T_{y_2}U_{q_1}$ while not changing the orientation of $U_{q_2}$. Now we move $\widetilde{e}$ to the first position of the basis of $T_{y_2}U_{q_2}$, i.e.
\[
U_{q_2}:\quad (-1)^{r'}(\widetilde{e}, u_1, \cdots, u_{r'}).
\]
Then we expand $\widetilde{e}, u_1, \cdots, u_{l'}$ into a basis of $T_{y_2}U_{q_1}$ such that it represents the orientation of $U_{q_1}$. To be specific, we have the orientation
\[
U_{q_1}:\quad (v_1, \cdots, v_{m'}, \widetilde{e}, u_1, \cdots, u_{l'}).
\]
Then we induce the orientations of $U_{q_1}\cap U_{q_2'}$ and $U_{q_1}\cap U_{q_2}$ as follow
\[
U_{q_1}\cap U_{q_2'}:\quad \text{sgn}(v_1, \cdots, v_{m'}, \widetilde{e}, u_1, \cdots, u_{r'})\cdot (u_1, \cdots, u_{l'}),
\]
\[
U_{q_1}\cap U_{q_2'}:\quad (-1)^{r'}\text{sgn}(v_1, \cdots, v_{m'}, \widetilde{e}, u_1, \cdots, u_{r'})\cdot (\widetilde{e}, u_1, \cdots, u_{l'}).
\]
Now we move $\widetilde{e}$ back to the end of the basis, so we have the orientation of $U_{q_1}\cap U_{q_2}$ actually is
\[
U_{q_1}\cap U_{q_2}:\quad (-1)^{r'+l'}\text{sgn}(v_1, \cdots, v_{m'}, \widetilde{e}, u_1, \cdots, u_{r'})\cdot (u_1, \cdots, u_{l'}, \widetilde{e}).
\]
Then we induce the orientation of the moduli spaces $\mathcal{M}(q_2,q_2') \times \mathcal{M}(q_1,q_2',q_3')$ and $\mathcal{M}(q_1,q_2,q_3')$, but there will be no difference to what we did concerning the gradient trajectory trees breaking at $q_1'$. So finally, for the term
\[
\sum_{\mu(q_2')=\mu(q_2)-1}\sharp \mathcal{M}(q_2,q_2')\sharp \mathcal{M}(q_1,q_2',q_3')[q_3'],
\]
the sign is $(-1)^{r'+l'}=(-1)^{r'-l'}=(-1)^{n-\mu(q_1)}=(-1)^{\mu(q_1)-n}$.
\par Lastly, in a similar but much easier way of orientation deduction, we know the sign is $+1$ for the term
\[
\sum_{\mu(q_3)=\mu(q_1)+\mu(q_2)-n}\sharp \mathcal{M}(q_1,q_2,q_3)\sharp\mathcal{M}(q_3,q_3')[q_3'].
\]
\par Therefore,
\[
\partial m_2([q_2],[q_1])+m_2(\partial [q_2],[q_1])+m_2([q_2],\partial [q_1])
\]
\[
= \sum_{\mu(q_3')=\mu(q_1)+\mu(q_2)-n}\sharp \partial\overline{\mathcal{M}}(q_1,q_2,q_3')[q_3'].
\]
As $\sharp \partial\overline{\mathcal{M}}(q_1,q_2,q_3')$ are always zero, the Leibnitz rule is proved.
\end{proof}

\subsection{Gradient Trajectory Tree}
\par The $A_\infty$-structure of Morse Category is closely related to the gradient trajectory trees of the Morse functions, so we introduce the gradient trajectory trees as preparation.
\begin{defi}
A $k$-leaf tree is a directed connected graph with $2k$ vertices, $(2k-1)$ edges, and no loops, satisfying:
\begin{itemize}
  \item There are $k$ `inward' vertices, each of which has out-degree(the number of edges originating from the vertex) $1$ and in-degree(the number of edges ending at the vertex) $0$.
  \item There is one `outward' vertex, which has out-degree $0$ and in degree $1$.
  \item When $k\neq 1$, there are $k-1$ `inside' vertices, each of which has out-degree $1$ and in-degree $2$. Otherwise, there is one `inside' vertex with out-degree $1$ and in-degree $1$.
\end{itemize}
The edges connected to the `inward' vertices or the `outward' vertices are called inward edges and outward edges respectively. Other edges are called internal edges. The `inside' vertex that is connected to the only outward edge is called the root vertex.
\end{defi}
Besides, we can endow the edges of the graph with length.
\begin{defi}
A metric $k$-leaf tree is a $k$-leaf tree in which each internal edge has a length, but each inward or outward edge is treated semi-infinite. In this case, we omit the inward and outward vertices since they are at the infinite end of the semi-infinite edges.
\end{defi}
\par It can be easily seen that there are only finitely different types of $k$-leaf trees for each $k$ up to the isomorphism of graphs, and each of these types corresponds to a component of metric $k$-leaf trees, which is homeomorphic to $(0, +\infty)^{k-2}$. For a $k$-leaf tree $T$, we denote by $\text{Com}(T)$ its corresponding component of metric $k$-leaf tree.
\par We can attach labels to the edges for the trees in the following way. When we put a $k$-leaf tree into dimension-two plane, the tree will divide the plane into $k+1$ parts. We can first label the parts in the counterclockwise order from $0$ to $k+1$. We then attach labels to the edges such that it is the combination of the labels of the two parts next to the edge. For example, the labels of the edges of a $5$-leaf tree are given in Figure \ref{Fig5}.
\begin{figure}[htbp]
\centering
\includegraphics[width=0.4\textwidth]{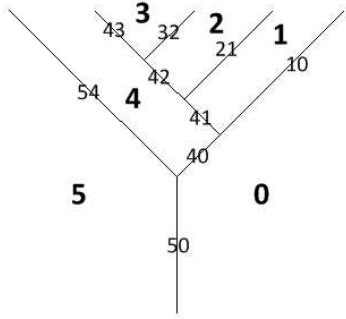}
\caption{Labels of A $5$-leaf Tree}
\label{Fig5}
\end{figure}
\par Then it comes to the definition of gradient trajectory trees.
\begin{defi}
Let $F=(f_0, f_1, \ldots, f_k)$  be a sequence of functions on $M$ such that $f_i-f_j$ is a horn-end transverse Morse function for all $i\neq j$. Then a gradient trajectory tree $F$ of shape $T$ is the image of a continuous map
\[
\tau_F: \widetilde{T} \longrightarrow M,
\]
satisfying
\begin{itemize}
  \item $\widetilde{T}$ is a metric $k$-leaf tree whose shape is $T$;
  \item the inward vertice whose inward edge is labeled $(i+1)i$ is sent to a critical point of $f_i-f_{i+1}$ and the outward vertice is sent to a critical point of $f_0-f_k$;
  \item the edge labeled $ba$ is sent to the gradient trajectory of $f_a-f_b$ with its length as the time of flowing along the gradient trajectory from one end to the other, and the direction of the edge accords with the decreasing direction of $f_a-f_b$;
  \item the images of the inward edges start at critical points, while the image of the outward edge end at a critical point(the former is called the starting points of the gradient trajectory tree, while the latter is called the end point of the tree).
\end{itemize}
\end{defi}
\subsection{$A_\infty$-Structure}
We define an $A_\infty$-category for the relative Morse Complex in this section. First we review the definition of an $A_\infty$-category.
\begin{defi}
In an $A_\infty$-category $\mathfrak{C}$, there is a class of objects $Ob(\mathfrak{C})$, and for two objects $a,b$, the morphisms from $a$ to $b$ is a chain complex $C_*(a,b)$. Moreover, $\forall a_0, a_1, \ldots, a_k\in Ob(\mathfrak{C})$, there is a linear map
\[
m_k: C_*(a_{k-1}, a_k)\otimes \cdots \otimes C_*(a_0, a_1) \rightarrow C_*(a_0, a_k)[2-k],
\]
such that
\begin{itemize}
  \item $m_1$ is the boundary homomorphism of the chain complex;
  \item the maps have the following relationships:
  \[
  (\partial m_k)(z_{k-1}\otimes\cdots\otimes z_0)=\sum_{(i, j)\in\Lambda}\pm m_{k-j+i+1}(z_{k-1}\otimes\cdots\otimes m_{j-i}(z_{j-1}\otimes \cdots\otimes z_i)\otimes\cdots\otimes z_0),
  \]
  where the index set $\Lambda=\{(i,j)|0\leq i< j\leq k, (i,j) \neq (0,k))\}$.
\end{itemize}
\end{defi}
\par Now we come back to our relative Morse homology. The Morse Category $\mathfrak{MC}(M)$ for the relative Morse complex of $M$ is defined as follow:
\begin{itemize}
  \item The set $Ob(\mathfrak{MC}(M))$ consists of the horn-end transverse Morse functions.
  \item The set of morphisms between objects $a, b$ is a modified relative Morse complex $CM_*(a-b)$, in which a critical point $q\in Cr_k^\bullet(a-b)$ is graded $|q| = k-n(n=\dim M)$.
\end{itemize}
\par Now we have already acquired a category structure. The next step is to define the linear maps so as to gain an $A_\infty$-category. For a sequence of Morse functions $F=(f_0, f_1, \ldots, f_k)$ on $M$ such that $f_i-f_j$ is a horn-end transverse Morse function for all $i\neq j$, we define the moduli space of their critical points.
\begin{defi}
Let $Q=(q_0, q_1, \ldots, q_k)$ be a sequence of points on $M$ in which $q_i\in Cr^\bullet(f_i-f_{i+1}) (i=0, 1, \ldots, k-1)$, and $q_k\in Cr^\bullet(f_0-f_k)$. The moduli space $\mathcal{M}(q_0, q_1, \ldots, q_k)$ of $Q$ is the set of gradient trajectory trees of $F$ whose starting points are $q_i(i=0, 1, \ldots, k-1)$ and whose end point is $q_k$.
\end{defi}
\par In order for the moduli spaces to be a manifold, we require that the stable and unstable manifolds of the critical points intersect transversely. Given a point $x\in U_{q_i}(i=0, 1, \ldots, k-1)$ and a $k$-leaf metric tree $T$, we are able to determine a point $y(i, x, T)\in M$ in the following way(Figure \ref{Fig6}).
\begin{figure}[htbp]
\centering
\includegraphics[width=0.4\textwidth]{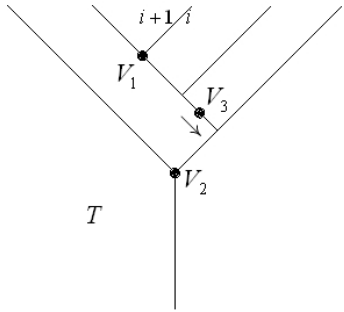}
\caption{The Movement of $V_3$}
\label{Fig6}
\end{figure}
Let $V_1$ be the inside vertex in $T$ that is connected to the incoming edge labeled $(i+1)i$, and $V_2$ be the root vertex of $T$. Now we move a point $V_3$ in the metric tree $T$ from $V_1$ to $V_2$, while at the same time we also move $x$ in $M$. When $V_3$ is moved along an edge labeled $ba$ with length $l$, we require $x$ to be moved along the gradient trajectory of $f_a-f_b$ for time $L$. Then we choose $y(x, T)$ to be the position of $x$ when $V_3$ reaches $V_2$. Based on the method of determining $y(x, T)$, we define the evaluation map
\begin{align*}
E_k: \quad\quad & U_0\times\cdots\times U_{k-1}\times S_{k}\times \text{Com}(T) && \longrightarrow\quad\quad\quad\quad M^{k+1}\\
& \quad\quad\quad(x_0, \cdots, x_k, T) && \longmapsto (y(0, x_0, T), \cdots, y(k, x_k, T)).
\end{align*}
We call the sequence of transverse Morse functions $F$ generic if the image of $E_k$ intersect transversely with $\Delta=\{(x, x, \ldots, x)|x\in M\}$ for all $Q$. For generic $F$, the moduli space $\mathcal{M}(q_0, q_1, \ldots, q_k)$ can be endowed with manifold structure. Moreover, due to the transversality condition, we have
\[
\dim (\mathcal{M}(q_0, q_1, \ldots, q_k))=\mu (q_0) + \cdots + \mu (q_{k-1}) - \mu(q_k) - (k - 1)n + k - 2,
\]
where $n$ is the dimension of $M$.
\par Then we finally come to the definition of $m_k$. For sequence of critical points $Q=(q_0, q_1, \ldots, q_k)$, $\mu(q_k) = \mu (q_0) + \cdots + \mu (q_{k-1}) - (k - 1)n + k - 2$, the dimension of $\mathcal{M}(q_0, q_1, \ldots, q_k)$ is zero. Taking $\sharp\mathcal{M}(q_0, q_1, \ldots, q_k)$ as the number of points of the moduli space with sign, we define
\[
m_k([q_{k-1}]\otimes[q_{k-2}]\otimes\cdots\otimes[q_0]) = \sum_{q_k} \sharp\mathcal{M}(q_0, q_1, \ldots, q_k)[q_k],
\]
in which the sum is over all $q_k\in Cr^\bullet(f_0-f_k)$ with the right Morse index. In this case, we have
\[
|q_k| + 2 - k = |q_0| + |q_1| + \cdots + |q_{k-1}|.
\]
\par Now we prove the $A_\infty$-relationship. We first deal with the left hand side:
\begin{align*}
 & (\partial m_k)([q_{k-1}]\otimes\cdots\otimes[q_0])\\
= & \partial(\sum_{q_k} \sharp\mathcal{M}(q_0, q_1, \ldots, q_k)[q_k])\\
= & \sum_{q_k}\sharp\mathcal{M}(q_0, q_1, \ldots, q_k) \partial([q_k])\\
= & \sum_{q_k}\sharp\mathcal{M}(q_0, q_1, \ldots, q_k) \sum_{\mu(q_{k+1}) = \mu(q_k) - 1}\sharp\mathcal{M}(q_k, q_{k+1})[q_{k+1}]\\
= & \sum_{q_{k+1}}\sum_{q_k} \sharp\mathcal{M}(q_0, q_1, \ldots, q_k) \sharp\mathcal{M}(q_k, q_{k+1}) [q_{k+1}].
\end{align*}
Besides, the compactification of each dimension 1 moduli space $\mathcal{M} (q_0, q_1, \ldots, q_{k+1})$ can be divided into the compactification of several smaller moduli spaces of gradient trajectory trees, and each of the smaller ones contains all gradient trajectory trees with a certain shape. Moreover, the boundaries of the smaller spaces either become a part of the boundary of the whole moduli space, or can also be treated as the boundaries of other smaller spaces but with opposite orientation. So the number of points (with sign) of the whole moduli space is the sum of those of the smaller ones. Let $\mathcal{M}_T(Q)$ be the moduli space of critical points sequence $Q$ that contains only the gradient trajectory trees of shape $T$, and $\Gamma$ be the set of all possible shapes of $k$-leaf gradient trajectory tree. Then, for example, we can rewrite the above equation as
\[
(\partial m_k)([q_{k-1}]\otimes\cdots\otimes[q_0]) = \sum_{q_{k+1}}\sum_{q_k}\sum_{T\in\Gamma} \sharp\mathcal{M}_T(q_0, q_1, \ldots, q_k) \sharp\mathcal{M}(q_k, q_{k+1}) [q_{k+1}].
\]
\par Then for each $T$, we consider the compactification of the dimension-one moduli space $\mathcal{M}_T(q_0, q_1, \ldots, q_{k-1}, q_{k+1})$. The gradient trajectory trees on boundary are all of the configuration that the tree breaks at one of its edge. In other words, the gradient trajectory trees on boundary becomes the union of two smaller gradient trees with the end point of one being also a starting point of the other. We call this critical point the breaking point. In fact, the breaking point cannot be on negative boundary for the same reason as we proposed in the section of cup product that once a semi-infinite gradient trajectory approaches negative boundaries, it can never come back. If we pull the matter of breaking back to the level of graph, it can be either making one of the internal edges of the $k$-leaf tree become infinite, separating the edge into two semi-infinite ones, and thus acquiring an $a$-leaf tree and a $b$-leaf tree in which $a+b=k+1$, or ,making one of the semi-infinite edges of the $k$-leaf tree become infinite, separating the edge into a two-side-infinite edge(which can also be treated as a $1$-leaf tree) and a semi-infinite edge, and thus acquiring a $1$-leaf tree and a $k$-leaf tree. We use $T=T_1\sqcup_{ij} T_2$ to express $T$ can be broken into $T_1$ and $T_2$ at the edge labeled $ij$. Then, the codimension-one boundary of the compactified moduli space $\overline{\mathcal{M}_T}(q_0, q_1, \ldots, q_{k-1}, q_{k+1})$ is
\[
\partial\overline{\mathcal{M}_T}(q_0, q_1, \ldots, q_{k-1}, q_{k+1}) =
\]
\[
\bigcup_{Y= T_1 \sqcup_{ij} T_2, q} \mathcal{M}_{T_1}(q_i, \cdots, q_{j-1}, q) \times \mathcal{M}_{T_2}(q_0, \cdots, q_{i-1}, q, q_{j}, \cdots, q_{k-1}, q_{k+1}),
\]
in which $q$ refers to internal critical points and critical points on positive boundaries such that the dimensions of the two moduli spaces are zero. Therefore,
\[
\sharp\partial\overline{\mathcal{M}_T}(q_0, q_1, \ldots, q_{k-1}, q_{k+1}) =
\]
\[
\sum_{Y= T_1 \sqcup_{ij} T_2, q} \pm \sharp \mathcal{M}_{T_1}(q_i, \cdots, q_{j-1}, q) \times \sharp \mathcal{M}_{T_2}(q_0, \cdots, q_{i-1}, q, q_{j}, \cdots, q_{k-1}, q_{k+1}).
\]
As the number of points with sign of a dimension-one manifold is always zero, we have
\[
\sum_{T\in \Gamma} \sum_{T= T_1 \sqcup_{ij} T_2, q} \pm \sharp \mathcal{M}_{T_1}(q_i, \cdots, q_{j-1}, q) \sharp \mathcal{M}_{T_2}(q_0, \cdots, q_{i-1}, q, q_{j}, \cdots, q_{k-1}, q_{k+1}) = 0
\]
holds true for all $q_{k+1}$ such that $\mu(q_{k+1}) = \mu (q_0) + \cdots + \mu (q_{k-1}) - (k - 1)n + k - 3$.
\par If we fix the breaking edge $ij$ and calculate the sum $\sharp_{ij} (q_{k+1})$ over all the possible $T, T_1, T_2, q$, we will get
\[
\sharp_{ij}(q_{k+1}) = \pm \sum_q\sharp\mathcal{M}(q_0, \cdots, q_{i-1}, q, q_j, \cdots, q_{k-1}, q_{k+1})\sharp\mathcal{M}(q_i, \cdots, q_{j-1}, q).
\]
\par It is easy to find out, according to the definitions of $A_\infty$-maps, that on one hand, when $(i,j)\neq(0,k)$, $\sharp_{ij}(q_{k+1})$ actually is the coefficient of $[q_{k+1}]$ of the term $m_{k-j+i}\circ m_{j-i+1}$, i.e.
\[
m_{k-j+i+1}([q_{k-1}]\otimes\cdots\otimes m_{j-i}([q_{j-1}]\otimes \cdots\otimes [q_i])\otimes\cdots\otimes [q_0]) = \sum_{q_{k+1}}\pm \sharp_{ij}(q_{k+1})[q_{k+1}].
\]
On the other hand, when $(i,j)=(0,k)$, we also have
\begin{align*}
& (\partial m_k)([q_{k-1}]\otimes\cdots\otimes[q_0])\\
& = \sum_{q_{k+1}}\sum_{q_k} \sharp\mathcal{M}(q_0, q_1, \ldots, q_k) \sharp\mathcal{M}(q_k, q_{k+1}) [q_{k+1}]\\
& = \sum_{q_{k+1}} \pm \sharp_{0k}(q_{k+1})[q_{k+1}].
\end{align*}
Then, because
\[
\sum_{(i,j)\in\Lambda'}\sharp_{ij}(q_{k+1}) = 0,
\]
the $A_\infty$-relationships are proved. We write the final result here.
\begin{thrm}
Let $F=(f_0, f_1, \ldots, f_k)$ be a sequence of objects in $\mathfrak{MC}(M)$, such that $f_i-f_j$ are also in $\mathfrak{MC}(M)$ for all $i\neq j$. For generic $F$, $\forall Q=(q_0, q_1, \ldots, q_{k-1})$ in which $q_s\in Cr^\bullet(f_s-f_{s+1})(s=0,1,\ldots,k-1)$, we have
\[
(\partial m_k)([q_{k-1}]\otimes [q_{k-2}] \otimes \cdots \otimes [q_0]) =
\]
\[
\sum_{(i, j)\in\Lambda}\pm m_{k-j+i+1}([q_{k-1}] \otimes \cdots \otimes m_{j-i}([q_{j-1}] \otimes \cdots \otimes [q_i]) \otimes \cdots \otimes [q_0]),
\]
where the index set is $\Lambda=\{(i,j)|0\leq i < j\leq k, (i,j) \neq (0,k))\}$, and the maps are
\[
m_k([q_{k-1}]\otimes[q_{k-2}]\otimes\ldots\otimes[q_0]) = \sum_{q_k} \sharp\mathcal{M}(q_0, q_1, \ldots, q_k)[q_k].
\]
\end{thrm}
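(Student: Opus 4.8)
The plan is to deduce the relations from the elementary fact that a compact one-dimensional manifold has an even number of boundary points, applied to the compactified dimension-one moduli spaces, in exact analogy with the way the Leibnitz rule for $m_2$ followed from $\sharp\partial\overline{\mathcal{M}}(q_1,q_2,q_3')=0$. So fix a sequence $Q'=(q_0,\ldots,q_{k-1},q_{k+1})$ with $q_s\in Cr^\bullet(f_s-f_{s+1})$, $q_{k+1}\in Cr^\bullet(f_0-f_k)$, and $\mu(q_{k+1})=\mu(q_0)+\cdots+\mu(q_{k-1})-(k-1)n+k-3$, so that by the dimension formula $\mathcal{M}(q_0,\ldots,q_{k-1},q_{k+1})$ is one-dimensional. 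First I would unwind the left-hand side: using $m_1=\partial$ and the definition of $m_k$ one gets $(\partial m_k)([q_{k-1}]\otimes\cdots\otimes[q_0])=\sum_{q_{k+1}}\bigl(\sum_{q_k}\sharp\mathcal{M}(q_0,\ldots,q_k)\,\sharp\mathcal{M}(q_k,q_{k+1})\bigr)[q_{k+1}]$, and the inner sum is exactly the count of those boundary points of $\overline{\mathcal{M}}(q_0,\ldots,q_{k-1},q_{k+1})$ at which the outgoing trajectory breaks at an intermediate critical point $q_k$ — the degeneration indexed by $(i,j)=(0,k)$. Symmetrically, for each $(i,j)\in\Lambda$ the boundary points at which the edge labelled $ij$ becomes bi-infinite, detaching the subtree $T_1$ carrying the leaves $q_i,\ldots,q_{j-1}$ from the complementary tree $T_2$, contribute $\sum_q\sharp\mathcal{M}_{T_1}(q_i,\ldots,q_{j-1},q)\,\sharp\mathcal{M}_{T_2}(q_0,\ldots,q_{i-1},q,q_j,\ldots,q_{k-1},q_{k+1})$, and summing over the tree shapes $T\in\Gamma$ this is precisely the $[q_{k+1}]$-coefficient of $m_{k-j+i+1}([q_{k-1}]\otimes\cdots\otimes m_{j-i}([q_{j-1}]\otimes\cdots\otimes[q_i])\otimes\cdots\otimes[q_0])$; the case $j-i=1$ gives $m_1=\partial$ applied to the single input $[q_i]$ and comes from a semi-infinite edge splitting off as a $1$-leaf tree.

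Next I would set up the compactness input. For a fixed $k$-leaf shape $T$ the moduli space $\mathcal{M}_T(q_0,\ldots,q_{k-1},q_{k+1})$ should admit a compactification $\overline{\mathcal{M}_T}$ that is a compact one-manifold with boundary, whose codimension-one strata are of two types: (a) the broken configurations above, where the length of an internal or semi-infinite edge tends to $+\infty$ and the tree splits at a critical point $q$; and (b) \emph{interior walls}, where the length of an internal edge tends to $0$ and the two inside vertices at its ends collapse to a single four-valent vertex. A wall of type (b) is shared by exactly the two shapes $T,T'\in\Gamma$ produced by the two resolutions of that four-valent vertex, and the boundary orientations induced on it from $\overline{\mathcal{M}_T}$ and $\overline{\mathcal{M}_{T'}}$ are opposite; hence when one sums $\sharp\partial\overline{\mathcal{M}_T}$ over all $T\in\Gamma$ the type-(b) contributions cancel in pairs and only the genuine breakings of type (a) remain. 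Here one must also use the lemma of Section 2 — once a semi-infinite gradient trajectory reaches the collar of a negative boundary it cannot come back — to conclude that the breaking critical point $q$ always lies in $Cr^\bullet$, i.e. is interior or on a positive boundary, never on a negative one; this is exactly the point at which excluding $Cr^-$ from the relative complex is indispensable, as it already was for $m_2$.

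The real work is the sign bookkeeping, and it is modelled on the $m_2$ computation. For each breaking edge $ij$ I would compare the boundary orientation that $\overline{\mathcal{M}_T}$ induces on the stratum $\mathcal{M}_{T_1}(\cdots)\times\mathcal{M}_{T_2}(\cdots)$ with the product orientation coming from the fixed orientations of the stable and unstable manifolds and the fibre-product (evaluation-map) description of the moduli spaces. As in the Leibnitz-rule proof, the outward tangent vector $e$ at the breaking point $q$ must be transported past the oriented blocks contributed by the leaves lying, in cyclic order, on one side of the collapsing edge, and each transposition of blocks produces a $\mathrm{sgn}$ factor governed by the number of those leaves and by the shifted degrees $|q_s|=\mu(q_s)-n$. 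I would carry this out block by block rather than vertex by vertex, checking that it specialises to the signs $+1$ and $(-1)^{\mu(q_1)-n}$ already found for $m_2$ and that in general it produces the Koszul sign prescribed by the $A_\infty$-axioms. Assembling everything: $\sharp\partial\overline{\mathcal{M}_T}(q_0,\ldots,q_{k-1},q_{k+1})=0$ for each $T$ and each $q_{k+1}$ of the correct index; summing over $T\in\Gamma$ kills the interior walls and yields $\sum_{(i,j)}\pm\sharp_{ij}(q_{k+1})=0$ over all pairs including $(0,k)$; identifying the $(0,k)$-term with $(\partial m_k)$ and the remaining terms with the nested compositions, then multiplying by $[q_{k+1}]$ and summing over $q_{k+1}$, gives the asserted identity.

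The main obstacle I anticipate is this sign verification in the presence of positive-boundary critical points: their unstable manifolds are half-balls carrying an extra ``inward'' collar vector in the orientation conventions of Section 2, and that vector interacts with the block transpositions above in a way that has to be tracked with care. A secondary but still technical point is making the interior-wall cancellation rigorous — exhibiting an oriented local chart for $\overline{\mathcal{M}}$ transverse to each four-valent degeneration and confirming the orientation reversal between the two resolving shapes — together with the transversality and gluing statements that justify treating $\overline{\mathcal{M}_T}$ as a compact one-manifold with the stated boundary.
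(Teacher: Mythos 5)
Your proposal follows essentially the same route as the paper: decompose the compactified one-dimensional moduli space $\overline{\mathcal{M}}(q_0,\ldots,q_{k-1},q_{k+1})$ by tree shapes, identify the codimension-one breakings at the edge labelled $ij$ (with the breaking point forced into $Cr^\bullet$ by the negative-boundary lemma) with the $[q_{k+1}]$-coefficient of $m_{k-j+i+1}\circ m_{j-i}$, and the $(0,k)$ breaking with $\partial m_k$, then conclude from the vanishing of the signed boundary count of the compact one-manifolds. If anything, you are more explicit than the paper on the two points it glosses over, namely the cancellation of zero-length-edge walls between adjacent tree shapes and the Koszul sign bookkeeping, which the paper leaves as unspecified $\pm$; so the plan is sound and matches the paper's argument.
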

\par This theorem is equivalent to Theorem \ref{t2}.

\section*{Acknowledgements}
The authors would like to thank the academic advisor Huijun Fan for the guidance on mathematics at every stage of the research and the kind encouragement all the time. The authors were supported by the Huabao Foundation of Peking University.


\end{document}